\documentclass[11pt]{amsart}
\headheight=8pt     \topmargin=0pt \textheight=624pt
\textwidth=432pt \oddsidemargin=18pt \evensidemargin=18pt
\usepackage{amsmath}
\usepackage{amsthm}
\usepackage{amssymb}
\usepackage{verbatim}
\usepackage{hyperref}
\usepackage{color}
\usepackage{cite}

\begin{document}

\newtheorem{theorem}{Theorem}    
\newtheorem{proposition}[theorem]{Proposition}
\newtheorem{conjecture}[theorem]{Conjecture}
\def\theconjecture{\unskip}
\newtheorem{corollary}[theorem]{Corollary}
\newtheorem{lemma}[theorem]{Lemma}
\newtheorem{sublemma}[theorem]{Sublemma}
\newtheorem{observation}[theorem]{Observation}
\newtheorem{remark}[theorem]{Remark}
\newtheorem{definition}[theorem]{Definition}
\theoremstyle{definition}
\newtheorem{notation}[theorem]{Notation}
\newtheorem{question}[theorem]{Question}
\newtheorem{questions}[theorem]{Questions}
\newtheorem{example}[theorem]{Example}
\newtheorem{problem}[theorem]{Problem}
\newtheorem{exercise}[theorem]{Exercise}

\numberwithin{theorem}{section} \numberwithin{theorem}{section}
\numberwithin{equation}{section}

\def\earrow{{\mathbf e}}
\def\rarrow{{\mathbf r}}
\def\uarrow{{\mathbf u}}
\def\varrow{{\mathbf V}}
\def\tpar{T_{\rm par}}
\def\apar{A_{\rm par}}

\def\reals{{\mathbb R}}
\def\torus{{\mathbb T}}
\def\heis{{\mathbb H}}
\def\integers{{\mathbb Z}}
\def\naturals{{\mathbb N}}
\def\complex{{\mathbb C}\/}
\def\distance{\operatorname{distance}\,}
\def\support{\operatorname{support}\,}
\def\dist{\operatorname{dist}\,}
\def\Span{\operatorname{span}\,}
\def\degree{\operatorname{degree}\,}
\def\kernel{\operatorname{kernel}\,}
\def\dim{\operatorname{dim}\,}
\def\codim{\operatorname{codim}}
\def\trace{\operatorname{trace\,}}
\def\Span{\operatorname{span}\,}
\def\dimension{\operatorname{dimension}\,}
\def\codimension{\operatorname{codimension}\,}
\def\nullspace{\scriptk}
\def\kernel{\operatorname{Ker}}
\def\ZZ{ {\mathbb Z} }
\def\p{\partial}
\def\rp{{ ^{-1} }}
\def\Re{\operatorname{Re\,} }
\def\Im{\operatorname{Im\,} }
\def\ov{\overline}
\def\eps{\varepsilon}
\def\lt{L^2}
\def\diver{\operatorname{div}}
\def\curl{\operatorname{curl}}
\def\etta{\eta}
\newcommand{\norm}[1]{ \|  #1 \|}
\def\expect{\mathbb E}
\def\bull{$\bullet$\ }
\def\C{\mathbb{C}}
\def\R{\mathbb{R}}
\def\Rn{{\mathbb{R}^n}}
\def\Sn{{{S}^{n-1}}}
\def\M{\mathbb{M}}
\def\N{\mathbb{N}}
\def\Q{{\mathbb{Q}}}
\def\Z{\mathbb{Z}}
\def\F{\mathcal{F}}
\def\L{\mathcal{L}}
\def\S{\mathcal{S}}
\def\supp{\operatorname{supp}}
\def\dist{\operatorname{dist}}
\def\essi{\operatornamewithlimits{ess\,inf}}
\def\esss{\operatornamewithlimits{ess\,sup}}
\def\xone{x_1}
\def\xtwo{x_2}
\def\xq{x_2+x_1^2}
\newcommand{\abr}[1]{ \langle  #1 \rangle}

\newcommand{\Norm}[1]{ \|  #1 \| }
\newcommand{\set}[1]{ \{ #1 \} }
\def\one{\mathbf 1}
\def\whole{\mathbf V}
\newcommand{\modulo}[2]{[#1]_{#2}}
\renewcommand{\thefootnote}{\fnsymbol{footnote}}
\def\scriptf{{\mathcal F}}
\def\scriptg{{\mathcal G}}
\def\scriptm{{\mathcal M}}
\def\scriptb{{\mathcal B}}
\def\scriptc{{\mathcal C}}
\def\scriptt{{\mathcal T}}
\def\scripti{{\mathcal I}}
\def\scripte{{\mathcal E}}
\def\scriptv{{\mathcal V}}
\def\scriptw{{\mathcal W}}
\def\scriptu{{\mathcal U}}
\def\scriptS{{\mathcal S}}
\def\scripta{{\mathcal A}}
\def\scriptr{{\mathcal R}}
\def\scripto{{\mathcal O}}
\def\scripth{{\mathcal H}}
\def\scriptd{{\mathcal D}}
\def\scriptl{{\mathcal L}}
\def\scriptn{{\mathcal N}}
\def\scriptp{{\mathcal P}}
\def\scriptk{{\mathcal K}}
\def\frakv{{\mathfrak V}}

\allowdisplaybreaks

\arraycolsep=1pt
%
\newtheorem*{remark0}{\indent\sc Remark}
%
\renewcommand{\proofname}{Proof.} 

\title[Characterizations of a class of Musielak--Orlicz $\rm{BMO}$ spaces ]
{Characterizations of a class of Musielak--Orlicz  $\rm{BMO}$ spaces via commutators of  Riesz potential operators}
\author[Yanyan Han]{Yanyan Han*}
\address{Yanyan Han: School of Information Network Security\\People's Public Security
         University of China\\ Beijing 100038\\ People's Republic of China}
\email{hanyanyan$\_$bj@163.com}

\author[Hongwei Huang]{Hongwei Huang}
\address{Hongwei Huang: School of Mathematical Sciences\\ Xiamen University\\
	Xiamen 361005\\	People's Republic of China}
\email{hwhuang@xmu.edu.cn}

\author[Jinghan Shao]{Jinghan Shao}
\address{Jinghan Shao: School of Mathematical Sciences\\ Xiamen University\\
	Xiamen 361005\\	People's Republic of China}
\email{18103377159@163.com}

\author[Huoxiong Wu]{Huoxiong Wu}
\address{Huoxiong Wu: School of Mathematical Sciences\\ Xiamen University\\
	Xiamen 361005\\	People's Republic of China}
\email{huoxwu@xmu.edu.cn}

\keywords{fractional integral operators, commutators, BMO spaces,  Musielak--Orlicz Hardy spaces.\\
\thanks{$^*$Corresponding author.}
\indent{2020 Mathematics Subject Classification.} 47B47, 42B20, 42B30, 42B35.}


\begin{abstract}
The fractional integral operators $I_\alpha$ can be used to characterize the  Musielak--Orlicz Hardy spaces.
This paper shows that for $b\in \rm BMO(\mathbb R^n)$, the commutators $[b,I_\alpha]$ generated by fractional integral operators $I_\alpha$ with $b$ are bounded from the Musielak--Orlicz Hardy spaces $H^{\varphi_1}(\mathbb R^n)$ to the Musielak--Orlicz spaces $L^{\varphi_2}(\mathbb R^n)$ (where $1<u<\infty$ and $\varphi_1$, $\varphi_2$ are growth functions) if and only if $b\in \mathcal {BMO}_{\varphi_1,u}(\mathbb R^n)$, which are a class of non-trivial
subspaces of $\rm BMO(\mathbb R^n)$. Additionally, we obtain the boundedness of the commutator $[b,I_\alpha]$ from  $H^{\varphi_1}(\mathbb R^n)$ to $H^{\varphi_2}(\mathbb R^n)$. The corresponding results are also provided for commutators of fractional integrals associated with general homogeneous kernels.
\end{abstract}

\maketitle

\section{Introduction}

 Let $0<\alpha<n$, and $I_\alpha$ be the Riesz potential operators in $\mathbb{R}^n$, that is,
$$I_\alpha f(x)=\int_{\mathbb{R}^n}\frac{f(y)}{|x-y|^{n-\alpha}}dy.$$
Given $b\in L_{\rm loc}(\mathbb{R}^n)$, the commutator $[b, I_\alpha]$  is defined as
$$[b, I_\alpha]f(x):=b(x)I_\alpha f(x)-I_\alpha(bf)(x)=\int_{\mathbb{R}^n}[b(x)-b(y)]\frac{f(y)}{|x-y|^{n-\alpha}}dy.$$

A well-established result in the literature is that Chanillo \cite{C} initially demonstrated the boundedness of the commutator $[b, I_\alpha]$ from $L^p(\mathbb{R}^n)$ into $L^q(\mathbb{R}^n)$ for $1<p<n/\alpha$ and $1/q=1/p-\alpha/n$, if and only if $b\in \mathrm{BMO}(\mathbb{R}^n)$. Subsequent investigations by Ding et al. \cite{DLZ1} and Cruz-Uribe et al. \cite{CF} derived weak $L\log L$ type estimate of $[b,I_\alpha]$ when $b\in \mathrm{BMO}(\mathbb{R}^n)$. Further advancements were made in \cite{ST1}, where it was established that $[b, I_\alpha]$ maps $L_{\omega^p}^p(\mathbb{R}^n)$ to $L_{\lambda^q}^q(\mathbb{R}^n)$ provided $b\in \rm BMO_{\omega/\lambda}(\mathbb{R}^n)$, $1<p<n/\alpha$, $1/q=1/p-\alpha/n$, and $\omega,\,\lambda\in A_{p,q}$. Holmes et al. \cite{HRS} recently reinforced this conclusion by proving the equivalence that the commutator $[b, I_\alpha]$ is bounded from $L_{\omega^p}^p(\mathbb{R}^n)$ to $L_{\lambda^q}^q(\mathbb{R}^n)$ if and only if $b\in \rm{BMO}_{\omega/\lambda}(\mathbb{R}^n)$(with identical $p,q,\alpha,\omega$ conditions).

For the endpoint case $p=1$, Harboure et al. \cite{HST} showed that $[b,I_\alpha]$ is bounded from $H^1(\mathbb{R}^n)$ to $L^{n/(n-\alpha)}(\mathbb{R}^n)$ precisely when $b$ is constant almost everywhere. That means, even if $b\in \mathrm{BMO}(\mathbb{R}^n)$, the commutator $[b,I_\alpha]$ may fail to be bounded from $H^1_\omega(\mathbb{R}^n)$ to $L^q_{\omega^q}(\mathbb{R}^n)$ ($q=n/(n-\alpha)$) for certain $\omega\in A_\infty$.
To characterize the boundedness properties of $[b,I_\alpha]$ on the weighted Hardy spaces,  the authors in \cite{HanWu} proved that there are nontrivial subspaces of $\rm {BMO}(\mathbb R^n)$, $\mathcal{BMO}_{\omega,p}(\mathbb R^n)$ (see Section 2.2 for the definition), when $b$ belongs to these subspaces, such that $[b,I_\alpha]$ is bounded from $H_{\omega^p}^p$ to $L_{\omega^q}^q$, or from $H_{\omega^p}^p$ to $H_{\omega^q}^q$ for certain $0<p\leq 1$ with $1/q=1/p-\alpha/n$ and $\omega\in A_\infty$.

Moreover, this limitation extends beyond fractional integrals. Harboure et al. \cite{HST} also demonstrated that the commutator $[b, T]$ formed by the Calder\'on--Zygmund singular integral operator $T$ and $b\in\mathrm{BMO}(\mathbb{R}^n)$, might fail to be bounded from $H^1_\omega(\mathbb{R}^n)$ to $L^1_\omega(\mathbb{R}^n)$ for certain $\omega\in A_\infty$. In an effort to better understand the behavior of $[b,T]$ on weighted Hardy spaces,  Huy et al. \cite{HK,Yang} demonstrated that for $b\in \rm BMO(\mathbb{R}^n)$, with $n/(n+1)<p\le 1$ and $\omega\in A_{p(n+1)/n}$, the commutators $\{[b,R_j]\}_{j=1}^n$ of the classical Riesz transforms are bounded from the weighted Hardy spaces $H^p_\omega(\mathbb{R}^n)$ to the weighted Lebesgue spaces $L^p_\omega(\mathbb{R}^n)$ if and only if $b\in\mathcal{BMO}_{\omega,p}(\mathbb{R}^n)$, where the ``only if" part of this statement is essentially dependent on the Riesz transform characterization of $H^p_\omega(\mathbb{R}^n)$ (see \cite[Theorem 1.5]{CCYY}).

To address the limitations in weighted Hardy spaces systematically,  Ky \cite{K2014} proposed a new space recently, called the Musielak--Orlicz Hardy spaces $H^{\varphi}(\mathbb{R}^n)$, which generalizes the weighted Hardy spaces and the Orlicz--Hardy spaces \cite{JY1, JY2, JY3}. Within this framework, Huy and Ky \cite{MOH} generalized the results in \cite{HK,Yang} from the  weighted Hardy spaces to the Musielak--Orlicz Hardy spaces. Precisely, they \cite{MOH} identified a suitable subspace $\mathcal{BMO}_\varphi$ (see Section 2.2 for the definition and properties) of $\rm{BMO}(\mathbb{R}^n)$. When $b$ is an element of $\mathcal{BMO}_\varphi$, the commutator $[b,T]$ demonstrates boundedness from the Musielak--Orlicz Hardy spaces $H^\varphi(\mathbb{R}^n)$ to the Musielak--Orlicz spaces $L^\varphi(\mathbb{R}^n)$. Conversely, based on the results of Riesz transform characterization of $H^\varphi(\mathbb{R}^n)$, if $b$ is in $\rm{BMO}(\mathbb{R}^n)$ and the commutator of classical Riesz transforms, $\{[b,R_j]\}_{j=1}^n$, are bounded from $H^\varphi(\mathbb R^n)$ to $L^\varphi(\mathbb R^n)$, then $b\in \mathcal{BMO}_\varphi(\mathbb R^n)$.

For the Riesz potential operators, Huy and Ky \cite{FRACTIONAL} proved the necessary and sufficient conditions for the boundedness of the fractional integral operator $I_\alpha$ from the Musielak--Orlicz Hardy spaces $H^{\varphi_1}(\mathbb{R}^n)$ to the Musielak--Orlicz spaces $L^{\varphi_2}(\mathbb{R}^n)$. Motivated by this result, a natural question arises: can the boundedness of Riesz potential commutators be used to characterize the space $\mathcal {BMO}_\varphi(\mathbb{R}^n)$? In this article, we investigate this open problem systematically and give an affirmative answer.

To facilitate the narration, we will introduce some definitions.
 A nonnegative function $\varphi$ on $\mathbb{R}^n \times[0, \infty)$ is called a Musielak--Orlicz function if for any $x \in \mathbb{R}^n$, the function $\varphi(x, \cdot)$ is an Orlicz function on $[0, \infty)$, and for any $t \in[0, \infty)$, the function $\varphi(\cdot, t)$ is measurable on $\mathbb{R}^n$. Notice that a function $\psi:[0, \infty) \rightarrow[0, \infty)$ is said to be an Orlicz function if it is nondecreasing, $\psi(0)=0$, $\psi(t)>0$ whenever $t \in(0, \infty)$ and $\lim _{t \rightarrow \infty} \psi(t)=\infty$.
	
	Let $\varphi$ be a Musielak--Orlicz function on $\mathbb{R}^n \times[0, \infty)$. $\varphi$ is said to be of uniformly lower (resp. upper) type $p$ with $p \in \mathbb{R}$, if there exists a positive constant $C$ such that for any $x \in \mathbb{R}^n,\;t \geq 0$ and $s \in(0,1]($ resp. $s \in[1, \infty))$,
	$\varphi(x, s t) \leq C s^p \varphi(x, t).$
	The critical uniformly lower type index and the critical uniformly upper type index of $\varphi$ are defined by
	$$
	i(\varphi):=\sup \{p \in \mathbb{R}: \varphi \text { is of uniformly lower type } p\},
	$$
	$$
	I(\varphi):=\sup \{p \in \mathbb{R}: \varphi \text { is of uniformly upper type } p\}.
	$$
	
	A locally integrable function $\varphi(\cdot, t): \mathbb{R}^n \rightarrow[0, \infty)$ is said to satisfy the uniformly Muckenhoupt condition $\mathbb{A}_q$, denoted by $\varphi \in \mathbb{A}_q$, if there exists a positive constant $C$ such that for any ball $B \subset \mathbb{R}^n$ and $t \in(0, \infty)$,
	when $q=1$,
	$$\frac{1}{|B|} \int_B \varphi(x, t) d x(\operatorname{ess} \sup _{x \in B}[\varphi(x, t)]^{-1}) \leq C,$$
	and when $q \in(1, \infty)$,	
	$$\frac{1}{|B|} \int_B \varphi(x, t) d x\left(\frac{1}{|B|} \int_B[\varphi(x, t)]^{-1 /(q-1)} d x\right)^{q-1} \leq C.$$
	We define $\mathbb{A}_{\infty}:=\bigcup_{q \in[1, \infty)} \mathbb{A}_q$. Also, we define the critical weight index of $\varphi \in \mathbb{A}_{\infty}$, that is,
	$q(\varphi):=\inf \left\{q \in[1, \infty): \varphi \in \mathbb{A}_q\right\}.$ We remark that the uniformly Muckenhoupt condition $\mathbb{A}_q$ was first defined by {\rm\cite{YLK}}.

The uniformly reverse  H\"older's condition is said to hold for a Musielak--Orlicz function $\varphi$ (denoted $\varphi\in \mathbb{RH}_r$), when there exists $r\in(1, \infty)$ such that for any ball $B$
$$
[\varphi]_{\mathbb{R H}_r}:=\sup _{t \in(0, \infty)} \sup _{B \subset \mathbb{R}^n}\left\{\left(\frac{1}{|B|} \int_B[\varphi(x, t)]^r d x\right)^{1 / r}\left(\frac{1}{|B|} \int_B \varphi(x, t) d x\right)^{-1}\right\}<\infty,
$$
and when $r=\infty$,
$$
[\varphi]_{\mathbb{R H}_{\infty}}:=\sup _{t \in(0, \infty)} \sup _{B \subset \mathbb{R}^n}\left\{\underset{x \in B}{\operatorname{ess} \sup } ~\varphi(x, t)\left(\frac{1}{|B|} \int_B \varphi(x, t) d x\right)^{-1}\right\}<\infty\;\;\;r=\infty.
$$
We define the critical weight index of $\varphi\in\mathbb{RH}_{\infty}$, that is $r(\varphi):=\sup\{r\in(1,\infty]:\varphi\in\mathbb{RH}_r\}$.
Clearly, when $\varphi(x,t)=\varphi(x)$ for all $t\in \mathbb R^n$, the uniformly Muckenhoupt condition  is the classical Muckenhoupt condition ${\rm A}_q$($q\in [1,\infty)$),
and the uniformly reverse
H\"older condition is the classical reverse H\"older condition ${\rm RH}_r$($r\in (1,\infty]$).

	A function $\varphi: \mathbb{R}^n \times[0, \infty) \rightarrow[0, \infty)$ qualifies as a growth function if the following conditions are satisfied: $\varphi$ is a Musielak--Orlicz function; $\varphi \in \mathbb{A}_{\infty}$; $\varphi$ is of uniformly lower type $p$ for some $p \in(0,1]$ and of uniformly upper type 1.

Below are the results of Riesz potential operators characterizing the Musielak--Orlicz Hardy spaces given by Huy and Ky \cite{FRACTIONAL}.

\medskip

\hspace{-14pt}{\bf Theorem A}(\hspace{0.5pt}\cite{FRACTIONAL})
\textit{ Let  $\alpha\in (0,n)$, $\varphi_1, \varphi_2$ be two growth functions.
Then, the necessary and sufficient condition for the boundedness of $I_\alpha$ from $H^{\varphi_1}(\mathbb{R}^n)$ into $L^{\varphi_2}(\mathbb{R}^n)$ is that there exists a positive constant $C$ such that, for all balls $B \subset \mathbb{R}^n$,
$|B|^{\frac{\alpha}{n}}\|\chi_B\|_{L^{\varphi_2}(\mathbb{R}^n)} \leq C\|\chi_B\|_{L^{\varphi_1}(\mathbb{R}^n)}$.}

\medskip

\hspace{-14pt}{\bf Theorem B}(\hspace{0.5pt}\cite{FRACTIONAL})
\textit{ Let $\alpha \in(0, n)$,  $\varphi_1, \varphi_2$ be two growth functions. Then, the necessary and sufficient condition for the boundedness of $I_\alpha$ from $H^{\varphi_1}(\mathbb{R}^n)$ into $H^{\varphi_2}(\mathbb{R}^n)$ is that there exists a positive constant $C$ such that, for all balls $B \subset \mathbb{R}^n$,
$|B|^{\frac{\alpha}{n}}\|\chi_B\|_{L^{\varphi_2}(\mathbb{R}^n)} \leq C\|\chi_B\|_{L^{\varphi_1}(\mathbb{R}^n)}$.}

\medskip

 {\bf Based on the above results, we characterize $\mathcal{BMO}_{\varphi}(\mathbb{R}^n)$ via the boundedness of the commutators of the Riesz potential operators on the Musielak--Orlicz Hardy spaces and obtain the following results.}

\begin{theorem}\label{T1.1}
    Let  $\alpha\in (0,1)$, $\varphi_1, \varphi_2$ be two growth functions, $\int_{\mathbb R^n}{\varphi_1 (x,(1+|x|)^{-n}})dx<\infty$,  and $(n-\alpha+1)i(\varphi_2)>nq(\varphi_2)$.
    There exists a positive constant $C_1$ such that, for all balls $B\subset\mathbb{R}^n$, $|B|^{\frac{\alpha}{n}}\|\chi_B\|_{L^{\varphi_2}(\mathbb{R}^n)}\le C_1\|\chi_B\|_{L^{\varphi_1}(\mathbb{R}^n)}$.
    If $b\in\mathrm{BMO}(\mathbb{R}^n)$, then the following two statements are equivalent:\\
    \rm{(i)} the commutator $[b,I_\alpha]$ is bounded from $H^{\varphi_1}(\mathbb{R}^n)$ to $L^{\varphi_2}(\mathbb{R}^n)$,\\
    \rm{(ii)} $b\in\mathcal{BMO}_{\varphi_1}(\mathbb{R}^n)$.
\end{theorem}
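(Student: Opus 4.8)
The plan is to prove the two implications of Theorem~\ref{T1.1} separately. The chief tools are: the atomic characterization of $H^{\varphi_1}(\mathbb R^n)$ from Section~2; the properties of the Luxemburg functionals $\|\chi_B\|_{L^{\varphi_i}}$ recalled there (doubling; the comparison $\|\chi_{\lambda B}\|_{L^{\varphi_i}}\lesssim\lambda^{nq(\varphi_i)/i(\varphi_i)+\varepsilon}\|\chi_B\|_{L^{\varphi_i}}$ for $\lambda\ge1$; the John--Nirenberg bound $\|(b-b_B)\chi_B\|_{L^{\varphi_i}}\lesssim\|b\|_{\mathrm{BMO}}\|\chi_B\|_{L^{\varphi_i}}$; and $\|\chi_B\|_{(L^{\varphi_2})^{*}}\simeq|B|\,\|\chi_B\|_{L^{\varphi_2}}^{-1}$); the standing assumption $|B|^{\alpha/n}\|\chi_B\|_{L^{\varphi_2}}\le C_1\|\chi_B\|_{L^{\varphi_1}}$, which by Theorem~A already gives $I_\alpha\colon H^{\varphi_1}\to L^{\varphi_2}$ (so the task is to upgrade this to the commutator); and the definition and basic properties of $\mathcal{BMO}_{\varphi_1}(\mathbb R^n)$ from Section~2 --- in particular that, for balls $B$ in the relevant range, its seminorm dominates $\|\chi_B\|_{L^{\varphi_1}}^{-1}\int_B|b-b_B|\,dx$. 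I will also use that the hypothesis $\int_{\mathbb R^n}\varphi_1(x,(1+|x|)^{-n})\,dx<\infty$ is precisely what makes $\sum_{k\ge1}|2^kB|^{-1}\|\chi_{2^kB}\|_{L^{\varphi_1}}$ summable.

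\textit{Sufficiency} (ii)$\Rightarrow$(i). By the atomic decomposition and the usual device for summing atoms in $L^{\varphi_2}$, it suffices to bound $\|[b,I_\alpha]a\|_{L^{\varphi_2}}$ uniformly over all $(\varphi_1,\infty,s)$-atoms $a$, where $\operatorname{supp}a\subset B=B(x_0,r)$, $\|a\|_{L^\infty}\lesssim\|\chi_B\|_{L^{\varphi_1}}^{-1}$, and $s$ is the smallest vanishing-moment order permitted by the atomic theory (determined by $q(\varphi_1),i(\varphi_1)$; as $\alpha<1$ only low moments occur). Write $[b,I_\alpha]a=(b-b_B)I_\alpha a-I_\alpha((b-b_B)a)$ and split $\mathbb R^n=2B\cup(2B)^{c}$. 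On $2B$: use $|I_\alpha a|\lesssim|B|^{\alpha/n}\|a\|_{L^\infty}$ and $|I_\alpha((b-b_B)a)|\lesssim\|b\|_{\mathrm{BMO}}|B|^{\alpha/n}\|a\|_{L^\infty}$ (both by a dyadic splitting of the defining integral), a generalized Hölder inequality in $L^{\varphi_2}$ to isolate $(b-b_B)\chi_{2B}$, John--Nirenberg, doubling and the standing assumption; here $b\in\mathrm{BMO}$ suffices. On $(2B)^{c}$: for $x\in2^{k+1}B\setminus2^kB$ ($k\ge1$), use $\int_Ba=0$ (and higher moments if $s\ge1$) to write
\begin{align*}
[b,I_\alpha]a(x)&=\int_B\bigl(b(x)-b(y)\bigr)\bigl(|x-y|^{\alpha-n}-|x-x_0|^{\alpha-n}\bigr)a(y)\,dy\\
&\quad-|x-x_0|^{\alpha-n}\int_B\bigl(b(y)-b_B\bigr)a(y)\,dy=:\mathrm{I}_k(x)+\mathrm{II}_k(x).
\end{align*}
For $\mathrm{I}_k$, the kernel difference contributes the factor $\lesssim2^{-k(n-\alpha+1)}r^{\alpha-n}$; combining with John--Nirenberg and the standing assumption applied on $B$, the $L^{\varphi_2}$-norm of the $k$-th annular piece is $\lesssim(k+1)\|b\|_{\mathrm{BMO}}\,2^{-k(n-\alpha+1-nq(\varphi_2)/i(\varphi_2)-\varepsilon)}$, and the series converges \emph{precisely because} $(n-\alpha+1)i(\varphi_2)>nq(\varphi_2)$. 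For $\mathrm{II}_k$ --- the crucial term --- the inner integral does not cancel, but $\bigl|\int_B(b-b_B)a\bigr|\le\|a\|_{L^\infty}\int_B|b-b_B|$, which the hypothesis $b\in\mathcal{BMO}_{\varphi_1}$ controls; together with $|x-x_0|^{\alpha-n}\le(2^kr)^{\alpha-n}\simeq|2^{k+1}B|^{\alpha/n-1}$ and the standing assumption on $2^{k+1}B$, the $k$-th piece has $L^{\varphi_2}$-norm $\lesssim\|b\|_{\mathcal{BMO}_{\varphi_1}}\,|2^{k+1}B|^{-1}\|\chi_{2^{k+1}B}\|_{L^{\varphi_1}}$, whose sum converges \emph{precisely because} $\int_{\mathbb R^n}\varphi_1(x,(1+|x|)^{-n})\,dx<\infty$. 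Re-summing over atoms gives (i). I expect this tail analysis --- where all three quantitative hypotheses enter in a tuned way, and where $\mathcal{BMO}_{\varphi_1}$ rather than merely $\mathrm{BMO}$ is forced by $\mathrm{II}_k$ --- to be the main technical work.

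\textit{Necessity} (i)$\Rightarrow$(ii). Since no $I_\alpha$-characterization of $H^{\varphi_1}$ is available, I would argue directly, à la Janson--Uchiyama. Fix a ball $B=B(x_0,r)$ and a companion $\tilde B=B(y_0,r)$ with $|x_0-y_0|\simeq r$; on $\{x\in B,\ y\in\tilde B\}$ the kernel is smooth and bounded below, so $|x-y|^{\alpha-n}=r^{\alpha-n}\sum_m c_m e^{iv_m\cdot(x-y)/r}$ with $\sum_m|c_m|(1+|v_m|)<\infty$ (extend $z\mapsto|z|^{\alpha-n}$ smoothly and periodically). Take $a=\|\chi_{\tilde B}\|_{L^{\varphi_1}}^{-1}\bigl(e^{-iv_{m_0}\cdot y/r}\,\overline{\operatorname{sgn}(b(y)-b_{\tilde B})}-P(y)\bigr)\chi_{\tilde B}(y)$, with $v_{m_0}\neq0$ fixed and $P$ a polynomial of degree $\le s$, $\|P\|_{L^\infty}\lesssim1$, imposing the moment conditions; then $a$ is a fixed multiple of a $(\varphi_1,\infty,s)$-atom, so $\|a\|_{H^{\varphi_1}}\lesssim1$ and $\|[b,I_\alpha]a\|_{L^{\varphi_2}}\le\|[b,I_\alpha]\|_{H^{\varphi_1}\to L^{\varphi_2}}$. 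Pairing $[b,I_\alpha]a$ over $B$ against $e^{iv_{m_0}\cdot x/r}\operatorname{sgn}(b(x)-b_{\tilde B})$ and expanding the kernel, the single mode $v_{m_0}$ survives and produces a main term comparable to $|B|^{\alpha/n}\|\chi_B\|_{L^{\varphi_1}}^{-1}\int_B|b-b_B|\,dx$ (using $\|\chi_{\tilde B}\|_{L^{\varphi_1}}\simeq\|\chi_B\|_{L^{\varphi_1}}$ and $\int_B|b-b_B|\le2\int_B|b-b_{\tilde B}|$), while the remaining modes and the polynomial correction give an error $\lesssim\|b\|_{\mathrm{BMO}}\cdot\delta\cdot(\text{main term})$ with $\delta\downarrow0$ as $|x_0-y_0|/r\uparrow\infty$; choosing that ratio large (depending on the given, finite $\|b\|_{\mathrm{BMO}}$) absorbs the error. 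Bounding instead $\bigl|\int_B\overline{g}\,[b,I_\alpha]a\bigr|\le\|[b,I_\alpha]a\|_{L^{\varphi_2}}\|\chi_B\|_{(L^{\varphi_2})^{*}}\lesssim\|[b,I_\alpha]\|_{H^{\varphi_1}\to L^{\varphi_2}}|B|\,\|\chi_B\|_{L^{\varphi_2}}^{-1}$ and comparing, the standing assumption $|B|^{\alpha/n}\|\chi_B\|_{L^{\varphi_2}}\le C_1\|\chi_B\|_{L^{\varphi_1}}$ then yields the uniform bound $\|\chi_B\|_{L^{\varphi_1}}^{-1}\int_B|b-b_B|\,dx\lesssim\|[b,I_\alpha]\|_{H^{\varphi_1}\to L^{\varphi_2}}$ over the relevant balls, i.e.\ $b\in\mathcal{BMO}_{\varphi_1}(\mathbb R^n)$. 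The delicate point in this half is exactly the control of the error terms in the Fourier expansion, i.e.\ making the single mode genuinely dominant.
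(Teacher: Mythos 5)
Your sufficiency argument is, in substance, a hands-on version of the paper's: the paper splits $[b,I_\alpha]a=(b-b_B)I_\alpha a-I_\alpha((b-b_B)a)$, handles the first term with only $b\in\mathrm{BMO}$ (Lemma \ref{3.1}, which is exactly your $\mathrm{I}_k$ computation with the modular, not the quasi-norm, summed over annuli), and disposes of the second term softly via Lemma \ref{2.7} ($\|(b-b_B)a\|_{H^{\varphi_1}}\lesssim\|b\|_{\mathcal{BMO}_{\varphi_1}}$) plus Theorem A. Your direct treatment of the tail term $\mathrm{II}_k$ is where the details matter: $L^{\varphi_2}$ is only a quasi-norm, so you cannot sum $\|\mathrm{II}_k\|_{L^{\varphi_2}}$ over $k$ and expect the result to be controlled by the single factor $\||\cdot-x_B|^{-n}\|_{L^{\varphi_1}(B^c)}$ appearing in the $\mathcal{BMO}_{\varphi_1}$ seminorm --- the sum of quasi-norms of disjointly supported pieces can strictly exceed the quasi-norm of their sum, and the hypothesis $\int\varphi_1(x,(1+|x|)^{-n})\,dx<\infty$ by itself does not make $\sum_k|2^kB|^{-1}\|\chi_{2^kB}\|_{L^{\varphi_1}}$ comparable to $\||\cdot-x_B|^{-n}\|_{L^{\varphi_1}(B^c)}$ uniformly in $B$. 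This is repairable (work with modulars, or simply keep $\mathrm{II}$ as the single function $|x-x_0|^{\alpha-n}\chi_{(2B)^c}\cdot\int_B(b-b_B)a$, which is what the proof of Lemma \ref{2.7} in the cited literature does), but as written it is a gap.

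The necessity direction is where your approach genuinely fails, for two reasons. First, the pairing step $\bigl|\int_B\overline{g}\,[b,I_\alpha]a\bigr|\le\|[b,I_\alpha]a\|_{L^{\varphi_2}}\|\chi_B\|_{(L^{\varphi_2})^{*}}$ with $\|\chi_B\|_{(L^{\varphi_2})^{*}}\simeq|B|\,\|\chi_B\|_{L^{\varphi_2}}^{-1}$ is false in the relevant range: a growth function has uniformly lower type $p$ for some $p\in(0,1]$, and when $p<1$ the space $L^{\varphi_2}$ is not locally convex; already for $\varphi_2(x,t)=t^p$ with $p<1$ the inequality $\int_B|h|\lesssim\|h\|_{L^{p}}|B|^{1-1/p}$ fails (test $h=N\chi_E$ with $|E|\ll|B|$). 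Second, and more fundamentally, even granting every step, your conclusion $\sup_B\|\chi_B\|_{L^{\varphi_1}}^{-1}\int_B|b-b_B|\,dx<\infty$ is \emph{not} the definition of $\mathcal{BMO}_{\varphi_1}(\mathbb R^n)$: the seminorm carries the additional far-field factor $\||\cdot-x_B|^{-n}\|_{L^{\varphi_1}(B^c)}$, which is what makes $\mathcal{BMO}_{\varphi_1}$ a proper subspace of $\mathrm{BMO}$. A Janson--Uchiyama argument that tests the commutator only against a companion ball at distance $\simeq r_B$ cannot produce this factor. The paper's route is different and tailored to it: take $\tilde a=\tfrac{1}{2\|\chi_B\|_{L^{\varphi_1}}}(f-f_B)\chi_B$ with $f=\operatorname{sign}(b-b_B)$, so that $(b-b_B)\tilde a\ge0$ and $\int_B(b-b_B)\tilde a=\tfrac{1}{2\|\chi_B\|_{L^{\varphi_1}}}\int_B|b-b_B|$; then Lemma \ref{2.9} gives the pointwise lower bound $\mathcal M_\phi((b-b_B)\tilde a)(x)\gtrsim|x-x_B|^{-n}\int_B(b-b_B)\tilde a$ for $x\notin B$, which upon taking $L^{\varphi_1}(B^c)$ quasi-norms produces exactly the missing factor; finally $\|(b-b_B)\tilde a\|_{H^{\varphi_1}}$ is controlled through $I_\alpha((b-b_B)\tilde a)=(b-b_B)I_\alpha\tilde a-[b,I_\alpha]\tilde a$, Lemma \ref{3.1}, and the assumed operator bound. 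You would need to replace your necessity argument by one of this type (or otherwise explain how the tail factor is recovered); as proposed, it proves a weaker statement than (ii).
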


\begin{theorem}\label{T1.3}
Let $\alpha\in(0,1)$, $\varphi_1,\;\varphi_2$ be growth functions, $\int_{\mathbb R^n}{\varphi_1 (x,(1+|x|)^{-n}})dx<\infty$, $\int_{\mathbb R^n}{\varphi_2 (x,(1+|x|)^{-n}})dx<\infty$, $\varphi_2\in\mathbb{RH}_\infty$ and $(n-\alpha+1)i(\varphi_2)>nq(\varphi_2)$. There exists a positive constant $C_1$ such that, for all balls $B\subset\mathbb{R}^n$, $|B|^{\frac{\alpha}{n}}\|\chi_B\|_{L^{\varphi_2}(\mathbb{R}^n)}\le C_1\|\chi_B\|_{L^{\varphi_1}(\mathbb{R}^n)}$.
    If $b\in\mathrm{BMO}(\mathbb{R}^n)$, then the commutator $[b,I_\alpha]$ is bounded from $H^{\varphi_1}(\mathbb{R}^n)$ to $H^{\varphi_2}(\mathbb{R}^n)$.
\end{theorem}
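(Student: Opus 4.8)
The plan is to argue atomically. Decompose $f\in H^{\varphi_1}(\mathbb R^n)$ into $(\varphi_1,q,s_1)$-atoms, show that the image of a single atom under $[b,I_\alpha]$ is, up to a uniform constant and one correction term, a molecule of $H^{\varphi_2}(\mathbb R^n)$, and then reassemble. Fix an atom $a$ with support $B=B(x_0,r_B)$, normalized so that $\|a\|_{L^q}\le|B|^{1/q}\|\chi_B\|_{L^{\varphi_1}}^{-1}$ and $\int_{\mathbb R^n}x^\gamma a(x)\,dx=0$ for $|\gamma|\le s_1$, where $q>\max\{q(\varphi_2),n/(n-\alpha)\}$ (so that Chanillo's \cite{C} $(L^{q_1},L^q)$ bound for $[b,I_\alpha]$ with $1/q=1/q_1-\alpha/n$ is available) and $s_1$ is a large integer. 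The hypothesis $(n-\alpha+1)i(\varphi_2)>nq(\varphi_2)$ is used to see that the molecular order of $H^{\varphi_2}(\mathbb R^n)$ equals $\lfloor n(q(\varphi_2)/i(\varphi_2)-1)\rfloor=0$, so a molecule needs only the single cancellation $\int M=0$; the condition $\varphi_2\in\mathbb{RH}_\infty$ and the two integrability conditions enter through the molecular theory of $H^{\varphi_2}(\mathbb R^n)$ and through the control of the correction term. (Note that, by Theorem~B, the standing comparison $|B|^{\alpha/n}\|\chi_B\|_{L^{\varphi_2}}\le C\|\chi_B\|_{L^{\varphi_1}}$ is exactly the boundedness of $I_\alpha$ itself from $H^{\varphi_1}$ to $H^{\varphi_2}$, and we use it in this normalized form.)

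For the size estimates, write $[b,I_\alpha]a=(b-b_B)I_\alpha a-I_\alpha((b-b_B)a)$ and split $\mathbb R^n=4B\cup\bigcup_{j\ge2}S_j$ with $S_j=2^{j+1}B\setminus 2^jB$. On $4B$, combine the $(L^{q_1},L^q)$ boundedness of $[b,I_\alpha]$ with the John--Nirenberg inequality and Hölder to obtain $\|[b,I_\alpha]a\|_{L^q(4B)}\lesssim\|b\|_{\mathrm{BMO}}|B|^{1/q}\|\chi_B\|_{L^{\varphi_1}}^{-1}$, which by the standing comparison is of molecular size on $4B$. On $S_j$, use the vanishing moments of $a$: one has $|I_\alpha a(x)|\lesssim r_B^{\,n+s_1+1}|x-x_0|^{-(n-\alpha+s_1+1)}\|\chi_B\|_{L^{\varphi_1}}^{-1}$ and $|b(x)-b_B|\lesssim j\,\|b\|_{\mathrm{BMO}}$, while a first-order Taylor expansion of $|x-y|^{-(n-\alpha)}$ about $x_0$ shows that $I_\alpha((b-b_B)a)(x)$ equals $|x-x_0|^{-(n-\alpha)}\int_B(b-b_B)a$ plus a term decaying like $|x-x_0|^{-(n-\alpha+1)}$. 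Hence on $S_j$ the function $[b,I_\alpha]a$ is a geometrically decaying ``good part'' of the required molecular size plus the slowly decaying ``tail'' $-\,|x-x_0|^{-(n-\alpha)}\int_B(b-b_B)a$. Summing the good part over $j$ with a decay exponent $\varepsilon\in(nq(\varphi_2)/i(\varphi_2)-n,\,s_1+1)$ (a nonempty interval, since the hypothesis gives $nq(\varphi_2)/i(\varphi_2)-n<1-\alpha$) and subtracting its convergent mean produces a genuine $(\varphi_2,q,0,\varepsilon)$-molecule $\mathcal M_a$.

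The \textbf{main obstacle} is the tail. Since $(b-b_B)a$ carries no cancellation, $I_\alpha((b-b_B)a)$ decays only like $|x-x_0|^{-(n-\alpha)}$, so $[b,I_\alpha]a\notin L^1(\mathbb R^n)$ in general and the tail is not a molecule by itself. I would isolate the scalar $\kappa_a:=\int_B(b-b_B)a=\int_B ba$ and write $[b,I_\alpha]a=\mathcal M_a+\kappa_a\,\mathcal G_a$, where $\mathcal G_a$ is, up to normalization, the profile $x\mapsto|x-x_0|^{-(n-\alpha)}\chi_{(4B)^c}(x)$ minus a bump on $4B$ carrying the same integral. The crude bound $|\kappa_a|\lesssim\|b\|_{\mathrm{BMO}}\,|B|\,\|\chi_B\|_{L^{\varphi_1}}^{-1}$ is not enough to place $\kappa_a\mathcal G_a$ even in $L^{\varphi_2}(\mathbb R^n)$---this is precisely why Theorem~\ref{T1.1} already forces $b\in\mathcal{BMO}_{\varphi_1}(\mathbb R^n)$, a condition which is implied by the conclusion of the present theorem because $H^{\varphi_2}(\mathbb R^n)\hookrightarrow L^{\varphi_2}(\mathbb R^n)$. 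Using instead the sharper control of $\kappa_a$ afforded by $b\in\mathcal{BMO}_{\varphi_1}(\mathbb R^n)$, and invoking $\varphi_2\in\mathbb{RH}_\infty$ together with $\int_{\mathbb R^n}\varphi_i(x,(1+|x|)^{-n})\,dx<\infty$ for $i=1,2$, one checks that $\kappa_a\mathcal G_a$ is a uniformly bounded multiple of a further $(\varphi_2,q,0,\varepsilon)$-molecule: the bump on $4B$ restores the zeroth moment, and the two integrability conditions are exactly what guarantee that the slowly decaying profile, once its mean is removed, has finite $H^{\varphi_2}$ norm. Finally, writing $f=\sum_i\lambda_i a_i$ and observing that $\{\mathcal M_{a_i}\}_i$ and $\{\kappa_{a_i}\mathcal G_{a_i}\}_i$ are families of uniformly bounded molecules associated with the same balls and coefficients as the atomic decomposition of $f$, the molecular characterization of $H^{\varphi_2}(\mathbb R^n)$ gives $\|[b,I_\alpha]f\|_{H^{\varphi_2}}\lesssim\big(\|b\|_{\mathrm{BMO}}+\|b\|_{\mathcal{BMO}_{\varphi_1}}\big)\|f\|_{H^{\varphi_1}}$. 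The routine but delicate points are the precise normalizations that make the $H^{\varphi_1}$ atomic functional dominate the $H^{\varphi_2}$ molecular functional via the comparison $|B|^{\alpha/n}\|\chi_B\|_{L^{\varphi_2}}\lesssim\|\chi_B\|_{L^{\varphi_1}}$, and the balancing of the logarithmic John--Nirenberg losses against the geometric decay.
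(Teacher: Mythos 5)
Your reduction to a single atom and the splitting $[b,I_\alpha]a=(b-b_B)I_\alpha a-I_\alpha((b-b_B)a)$ match the paper, but the way you handle the slowly decaying tail does not work. Since $\int_{|x|>1}|x|^{-(n-\alpha)}dx=\infty$ for $\alpha>0$, the profile $\mathcal G_a\sim|x-x_0|^{-(n-\alpha)}\chi_{(4B)^c}$ is not in $L^1(\mathbb R^n)$; it has no well-defined integral, so ``the bump on $4B$ restores the zeroth moment'' is meaningless and $\kappa_a\mathcal G_a$ cannot be a $(\varphi_2,q,0,\varepsilon)$-molecule in any standard sense. The paper avoids this entirely: by Lemma \ref{2.7} (which is exactly where $b\in\mathcal{BMO}_{\varphi_1}$ and $\int\varphi_1(x,(1+|x|)^{-n})dx<\infty$ enter), the non-cancellative function $(b-b_B)a$ already lies in $H^{\varphi_1}(\mathbb R^n)$, and Theorem B then places $I_\alpha((b-b_B)a)$ --- tail and all --- in $H^{\varphi_2}(\mathbb R^n)$. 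If you insist on treating the tail by hand you must estimate $\mathcal M_\phi(\kappa_a\mathcal G_a)$ in $L^{\varphi_2}$ directly, annulus by annulus (with the lower-type-$p$ trick for the quasi-norm), using the comparison $|2^jB|^{\alpha/n}\|\chi_{2^jB}\|_{L^{\varphi_2}}\lesssim\|\chi_{2^jB}\|_{L^{\varphi_1}}$ to trade $|x|^{-(n-\alpha)}$ against $|x|^{-n}$ in $\varphi_1$; none of this is molecular theory, and your sketch does not supply it.

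There is a second gap in the ``good part.'' Subtracting the convergent mean of $(b-b_B)I_\alpha a$ to manufacture a molecule introduces a compactly supported, non-cancellative bump $c\cdot\chi_{4B}$-type term, and by Lemma \ref{2.9} its $H^{\varphi_2}$ norm is comparable to $|c|\cdot\||\cdot-x_B|^{-n}\|_{L^{\varphi_2}((4B)^c)}$. Controlling this is a condition on $b$ relative to $\varphi_2$, not $\varphi_1$: this is precisely why the paper's proof estimates each piece $(b-b_{2^{j+1}B})a_j$ (where $I_\alpha a=\sum_j\lambda_ja_j$ is first decomposed into genuine $(H^{\varphi_2},q)$-atoms via Lemma \ref{3.2}) by $\|b\|_{\mathcal{BMO}_{\varphi_2,u}}$, and why the final bound is $C(\|b\|_{\mathcal{BMO}_{\varphi_1}}+\|b\|_{\mathcal{BMO}_{\varphi_2,u}})$ (consistent with Corollary \ref{Coro2}, which assumes $b\in\mathcal{BMO}_{\omega_1,p_1}\cap\mathcal{BMO}_{\omega_2,p_2,u}$; the bare hypothesis $b\in\mathrm{BMO}$ in the statement is not what the proof uses). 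Your claimed bound $C(\|b\|_{\mathrm{BMO}}+\|b\|_{\mathcal{BMO}_{\varphi_1}})$ drops the $\varphi_2$-condition and is not justified by your argument. A minor further point: $|b(x)-b_B|\lesssim j\|b\|_{\mathrm{BMO}}$ is false pointwise and must be replaced by the John--Nirenberg $L^q$-average on each annulus, as you partly acknowledge.
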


Consider $\varphi_i(x,t)=\omega_i(x)t^{p_i}$,  with $p_i\in(0,1]$ and $\omega_i\in \rm A_\infty(\mathbb R^n)$, $i=1,2$. Direct calculation shows $\|\chi_B\|_{L^{\varphi_1}(\mathbb{R}^n)}=(\omega_1(B))^{1/p_1}$ and $\|\chi_B\|_{L^{\varphi_2}(\mathbb{R}^n)}=(\omega_2(B))^{1/p_2}$. Then inequality $|B|^{\frac{\alpha}{n}}\|\chi_B\|_{L^{\varphi_2}(\mathbb{R}^n)}\le C\|\chi_B\|_{L^{\varphi_1}(\mathbb{R}^n)}$ becomes $|B|^{\alpha/n}(\omega_2(B))^{1/p_2}\leq C(\omega_1(B))^{1/p_1}$. We immediately have the following results.

\begin{corollary}\label{Coro1}
 Let $\alpha\in (0,1)$, $0\leq\frac{1}{p_1}-\frac{1}{p_2}\leq\frac{\alpha}{n}$, $\omega_1,\omega_2\in \rm A_{\infty}$, $\int_{\mathbb R^n}\frac{\omega_1(x)}{(1+|x|)^{np_1}}dx<\infty$, $(n-\alpha+1)p_2>nq(\omega_2)$ (
	$q(\omega):=\inf \left\{q \in[1, \infty): \omega\in {\rm A}_q\right\}.$
).   There exists a positive constant $C_1$ such that, for all balls $B\subset\mathbb{R}^n$, $|B|^{\frac{\alpha}{n}}(\omega_2(B))^{1/p_2}\leq C_1(\omega_1(B))^{1/p_1}$.
 If $b\in\mathrm{BMO}(\mathbb{R}^n)$,  then the following two statements are equivalent:

    \rm{(i)} the commutator $[b,I_\alpha]$ is bounded from $H^{p_1}_{\omega_1}(\mathbb{R}^n)$ to $L^{p_2}_{\omega_2}(\mathbb{R}^n)$,

    \rm{(ii)} $b\in\mathcal{BMO}_{\omega_1,p_1}(\mathbb{R}^n)$(see Section 2.2 for the definition).
\end{corollary}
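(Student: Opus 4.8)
The plan is to deduce Corollary~\ref{Coro1} directly from Theorem~\ref{T1.1} by specializing to the weight functions $\varphi_i(x,t)=\omega_i(x)t^{p_i}$ and checking that every hypothesis and every object appearing in Theorem~\ref{T1.1} reduces to its classical weighted counterpart. First I would verify that each $\varphi_i$ is a growth function: since $p_i\in(0,1]$, the function $t\mapsto t^{p_i}$ is an Orlicz function that is simultaneously of uniformly lower type $p_i$ and of uniformly upper type $1$, so $\varphi_i$ is a Musielak--Orlicz function of uniformly lower type $p_i\in(0,1]$ and uniformly upper type $1$; moreover, because $\varphi_i(\cdot,t)$ is a fixed scalar multiple of $\omega_i$ for each $t$, the condition $\varphi_i\in\mathbb{A}_\infty$ is exactly $\omega_i\in{\rm A}_\infty$ (here $\mathbb{A}_q$ collapses to ${\rm A}_q$, as remarked after the definition of $\mathbb{RH}_r$). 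Hence Theorem~\ref{T1.1} is applicable to this pair once its remaining numerical hypotheses are matched.

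Next I would translate the indices and side conditions. A direct computation from the definitions gives $i(\varphi_2)=p_2$, and, since the factor $t^{p_2}$ cancels in the $\mathbb{A}_q$ quotient, $\varphi_2\in\mathbb{A}_q\iff\omega_2\in{\rm A}_q$, so $q(\varphi_2)=q(\omega_2)$; thus the assumption $(n-\alpha+1)i(\varphi_2)>nq(\varphi_2)$ of Theorem~\ref{T1.1} is precisely $(n-\alpha+1)p_2>nq(\omega_2)$. Likewise $\varphi_1(x,(1+|x|)^{-n})=\omega_1(x)(1+|x|)^{-np_1}$, so the integrability assumption $\int_{\mathbb R^n}\varphi_1(x,(1+|x|)^{-n})\,dx<\infty$ is exactly $\int_{\mathbb R^n}\omega_1(x)(1+|x|)^{-np_1}\,dx<\infty$. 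Finally, unwinding the definition of the Luxemburg norm $\|\cdot\|_{L^{\varphi_i}(\mathbb R^n)}$ for $\varphi_i(x,t)=\omega_i(x)t^{p_i}$ yields $\|\chi_B\|_{L^{\varphi_i}(\mathbb R^n)}=(\omega_i(B))^{1/p_i}$, so the size condition $|B|^{\alpha/n}\|\chi_B\|_{L^{\varphi_2}(\mathbb R^n)}\le C_1\|\chi_B\|_{L^{\varphi_1}(\mathbb R^n)}$ in Theorem~\ref{T1.1} is literally the inequality $|B|^{\alpha/n}(\omega_2(B))^{1/p_2}\le C_1(\omega_1(B))^{1/p_1}$ imposed in the corollary; the range $0\le\frac1{p_1}-\frac1{p_2}\le\frac\alpha n$ is among the hypotheses and is the natural scaling restriction compatible with this inequality.

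It then remains to identify the spaces themselves: for $\varphi_i(x,t)=\omega_i(x)t^{p_i}$ one has the standard identifications $H^{\varphi_1}(\mathbb R^n)=H^{p_1}_{\omega_1}(\mathbb R^n)$ and $L^{\varphi_2}(\mathbb R^n)=L^{p_2}_{\omega_2}(\mathbb R^n)$, while comparing the definitions recalled in Section~2.2 shows $\mathcal{BMO}_{\varphi_1}(\mathbb R^n)=\mathcal{BMO}_{\omega_1,p_1}(\mathbb R^n)$. With these identifications, statements (i) and (ii) of the corollary become word for word statements (i) and (ii) of Theorem~\ref{T1.1}, and the corollary follows at once. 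I do not expect any genuine obstacle: the whole content lies in Theorem~\ref{T1.1}, and the only work is the bookkeeping above; the single point requiring a little care is verifying that the Musielak--Orlicz space $\mathcal{BMO}_{\varphi_1}$ actually \emph{coincides} with the weighted space $\mathcal{BMO}_{\omega_1,p_1}$, rather than merely containing or being contained in it, which is a routine unraveling of the two definitions in Section~2.2.
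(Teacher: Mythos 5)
Your proposal is correct and follows exactly the paper's route: the paper obtains Corollary~\ref{Coro1} by specializing Theorem~\ref{T1.1} to $\varphi_i(x,t)=\omega_i(x)t^{p_i}$ and noting $\|\chi_B\|_{L^{\varphi_i}(\mathbb R^n)}=(\omega_i(B))^{1/p_i}$, which is precisely your argument. You simply spell out in more detail the routine verifications (growth-function property, $i(\varphi_2)=p_2$, $q(\varphi_2)=q(\omega_2)$, and the identification $\mathcal{BMO}_{\varphi_1}=\mathcal{BMO}_{\omega_1,p_1}$) that the paper leaves implicit.
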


\begin{corollary}\label{Coro2}
 Let $\alpha\in (0,1)$, $0\leq\frac{1}{p_1}-\frac{1}{p_2}\leq\frac{\alpha}{n}$, $\omega_1\in \rm A_{\infty}$, $\omega_2\in  RH_\infty$, $\int_{\mathbb R^n}\frac{\omega_1(x)}{(1+|x|)^{np_1}}dx<\infty$, $\int_{\mathbb R^n}\frac{\omega_2(x)}{(1+|x|)^{np_2}}dx<\infty$, and $(n-\alpha+1)p_2>nq(\omega_2)$.
  There exists a positive constant $C_1$ such that, for all balls $B\subset\mathbb{R}^n$, $|B|^{\frac{\alpha}{n}}(\omega_2(B))^{1/p_2}\leq C_1(\omega_1(B))^{1/p_1}$. If $b\in\mathcal{BMO}_{\omega_1,p_1}(\mathbb{R}^n)\cap\mathcal{BMO}_{\omega_2,p_2,u}(\mathbb{R}^n)$ $(1<u<\infty)$, then the commutator $[b,I_\alpha]$ is bounded from $H^{p_1}_{\omega_1}(\mathbb{R}^n)$ to $H^{p_2}_{\omega_2}(\mathbb{R}^n)$.
\end{corollary}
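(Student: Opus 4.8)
The plan is to obtain Corollary~\ref{Coro2} as the specialization of Theorem~\ref{T1.3} to the pair of functions $\varphi_i(x,t):=\omega_i(x)t^{p_i}$, $i=1,2$, followed by the standard identification $H^{\varphi_i}(\mathbb{R}^n)=H^{p_i}_{\omega_i}(\mathbb{R}^n)$. First I would check that $\varphi_1$ and $\varphi_2$ are growth functions: each $\varphi_i$ is plainly a Musielak--Orlicz function, and from the identity $\varphi_i(x,st)=s^{p_i}\varphi_i(x,t)$ ($s>0$) together with $p_i\in(0,1]$ one reads off that $\varphi_i$ is of uniformly lower type $p_i\in(0,1]$ and of uniformly upper type $1$; in particular $i(\varphi_i)=p_i$. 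Moreover, since the factor $t^{p_i}$ cancels in every quotient appearing in the definitions of $\mathbb{A}_q$ and $\mathbb{RH}_r$, one has $\varphi_i\in\mathbb{A}_q\Leftrightarrow\omega_i\in\mathrm{A}_q$ and $\varphi_i\in\mathbb{RH}_r\Leftrightarrow\omega_i\in\mathrm{RH}_r$; hence $\omega_i\in\mathrm{A}_\infty$ forces $\varphi_i\in\mathbb{A}_\infty$ with $q(\varphi_i)=q(\omega_i)$, so $\varphi_1,\varphi_2$ are indeed growth functions, and $\omega_2\in\mathrm{RH}_\infty$ entails $\varphi_2\in\mathbb{RH}_\infty$.

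Next I would verify that the remaining hypotheses of Theorem~\ref{T1.3} hold for this choice. By the previous paragraph, the index condition $(n-\alpha+1)i(\varphi_2)>nq(\varphi_2)$ is exactly $(n-\alpha+1)p_2>nq(\omega_2)$; the integrability conditions $\int_{\mathbb{R}^n}\varphi_i(x,(1+|x|)^{-n})\,dx<\infty$ read $\int_{\mathbb{R}^n}\omega_i(x)(1+|x|)^{-np_i}\,dx<\infty$; and, using $\|\chi_B\|_{L^{\varphi_i}(\mathbb{R}^n)}=(\omega_i(B))^{1/p_i}$ (as already observed in the text preceding Corollary~\ref{Coro1}), the ball inequality required by Theorem~\ref{T1.3} becomes $|B|^{\alpha/n}(\omega_2(B))^{1/p_2}\le C(\omega_1(B))^{1/p_1}$. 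Each of these is among the hypotheses of Corollary~\ref{Coro2}. Finally, $b\in\mathcal{BMO}_{\omega_1,p_1}(\mathbb{R}^n)\cap\mathcal{BMO}_{\omega_2,p_2,u}(\mathbb{R}^n)\subset\mathrm{BMO}(\mathbb{R}^n)$, so the hypothesis $b\in\mathrm{BMO}(\mathbb{R}^n)$ of Theorem~\ref{T1.3} is met; moreover $\mathcal{BMO}_{\omega_i,p_i}=\mathcal{BMO}_{\varphi_i}$ and $\mathcal{BMO}_{\omega_2,p_2,u}=\mathcal{BMO}_{\varphi_2,u}$ for the above $\varphi_i$, so any finer membership condition is transferred as well.

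Applying Theorem~\ref{T1.3} then yields the boundedness of $[b,I_\alpha]$ from $H^{\varphi_1}(\mathbb{R}^n)$ to $H^{\varphi_2}(\mathbb{R}^n)$; since $H^{\varphi}(\mathbb{R}^n)=H^{p}_{\omega}(\mathbb{R}^n)$ with equivalent quasi-norms whenever $\varphi(x,t)=\omega(x)t^{p}$ with $\omega\in\mathrm{A}_\infty$ and $p\in(0,1]$ (cf. \cite{K2014}), this is precisely the asserted boundedness from $H^{p_1}_{\omega_1}(\mathbb{R}^n)$ to $H^{p_2}_{\omega_2}(\mathbb{R}^n)$. The entire argument is bookkeeping, and there is no genuine obstacle: the only step that requires verification rather than a direct citation is the dictionary relating the Musielak--Orlicz quantities ($i(\varphi_i)$, $q(\varphi_i)$, $[\varphi_i]_{\mathbb{RH}_r}$, $\|\chi_B\|_{L^{\varphi_i}}$, $H^{\varphi_i}$) to their weighted counterparts, and that verification rests entirely on the cancellation of the power $t^{p_i}$.
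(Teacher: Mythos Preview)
Your proposal is correct and follows exactly the approach indicated in the paper: the corollary is obtained by specializing Theorem~\ref{T1.3} to $\varphi_i(x,t)=\omega_i(x)t^{p_i}$, using the identifications $\|\chi_B\|_{L^{\varphi_i}}=(\omega_i(B))^{1/p_i}$, $i(\varphi_i)=p_i$, $q(\varphi_i)=q(\omega_i)$, $\mathcal{BMO}_{\varphi_i}=\mathcal{BMO}_{\omega_i,p_i}$, and $H^{\varphi_i}=H^{p_i}_{\omega_i}$. The paper simply states ``We immediately have the following results'' after recording these identifications, and your write-up supplies the routine bookkeeping behind that sentence; you were also prudent to note that the membership $b\in\mathcal{BMO}_{\varphi_1}\cap\mathcal{BMO}_{\varphi_2,u}$ (not merely $b\in\mathrm{BMO}$) is what the proof of Theorem~\ref{T1.3} actually uses.
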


\begin{remark}
It should be noted that Corollaries {\rm\ref{Coro1}} and {\rm\ref{Coro2}} improve the results of {\rm\cite{HanWu}} in the following ways: (i) Here, we consider the necessary and sufficient conditions for the boundedness of $[b,I_\alpha]$ on the weighted Hardy spaces, whereas {\rm\cite{HanWu}} only examines the sufficient conditions; (ii) The weight function of the space can be different Muckenhoupt weight functions $\omega_1$, $\omega_2$, while {\rm\cite{HanWu}} requires the weight function to be uniform.
\end{remark}

The rest of this paper is organized as follows. In Section \rm\ref{S2} we will recall some related definitions and  auxiliary lemmas. The characterization of $\mathcal{BMO}_\varphi$ will be given in Section \rm\ref{S3}. Finally, we will
present the corresponding results for the commutators of fractional integrals with general
homogeneous kernels in Section \rm\ref{S4}.
We remark that some ideas in our arguments are taken from \rm\cite{HanWu,Yang}, in which the Riesz potential operators,  the commutators of classical Calder\'on--Zygmund operators were dealt with on the weighted Hardy spaces.
	
	Finally, we make some conventions on notation. Throughout this paper, we denote by $C$ a positive constant which is independent of the main parameters, but it may vary from line to line. We denote   $E^c=\mathbb{R}^n\backslash E$ is the complementary set of any measurable subset $E$ of $\mathbb{R}^n$.

\section{Preliminaries}\label{S2}
In this section, we recall some auxiliary facts and lemmas, which will be used in our arguments.

\subsection{Musielak--Orlicz Hardy spaces}
The Musielak--Orlicz space $L^{\varphi}(\mathbb{R}^n)$ comprises all measurable functions $f$ on $\mathbb{R}^n$ satisfying $\int_{\mathbb{R}^n} \varphi\Big(x,\frac{|f(x)|}{\lambda}  \Big) d x<\infty$ for some $\lambda>0$, carrying the (quasi-)norm
$$
\|f\|_{L^{\varphi}(\mathbb{R}^n)}:=\inf \{\lambda>0: \int_{\mathbb{R}^n} \varphi\Big(x,\frac{|f(x)|}{\lambda}\Big) d x \leq 1\}.
$$
For any $A\subset{\mathbb{R}^n}$, we define
$$\|f\|_{L^{\varphi}(A)}:=\inf \{\lambda>0: \int_A \varphi\Big(x,\frac{|f(x)|}{\lambda}\Big) d x \leq 1\}.$$

\hspace{-14pt}{\bf Definition of Musielak--Orlicz Hardy spaces.} The  Musielak--Orlicz Hardy space $H^{\varphi}(\mathbb{R}^n)$ is the space of all $f\in \mathcal{S}'(\mathbb{R}^n) $, $\phi\in \mathcal S$ satisfying $\mathcal{M}_\phi f \in L^{\varphi}(\mathbb{R}^n)$  endowed with the (quasi-)norm
$$
\|f\|_{H^{\varphi}(\mathbb{R}^n)}:=\|\mathcal{M}_\phi f\|_{L^{\varphi}(\mathbb{R}^n)},
$$
where the maximal function of a distribution $f$ for each $x\in\mathbb{R}^n$ and $\phi_t(\cdot)=t^{-n}\phi(t^{-1}\cdot)$ is given by
$$\mathcal{M}_\phi f(x):=\sup\limits_{t\in(0,\infty)}|f*\phi_t(x)|.$$

To characterize $H^{\varphi}(\mathbb{R}^n)$ via atomic decomposition, we need both  the definition of $L_\varphi^q(\mathbb{R}^n)$ spaces, and the precise notion of atoms in this framework. We present  the definition of  spaces first.
For any ball $B$ in $\mathbb{R}^n$, we denote $L_{\varphi}^q(B), q\in(q(\varphi),\infty]$ as the collection of all measurable functions $f$ on $\mathbb{R}^n$ satisfy $\supp f \subset B$ such that
$$
\|f\|_{L_{\varphi}^q(B)}:= \begin{cases}\sup\limits_{t>0}(\frac{1}{\varphi(B, t)}{\int_B|f(x)|^q \varphi(x, t) d x})^{1 / q}<\infty, & 1 \leq q<\infty, \\ \|f\|_{L^{\infty}(\mathbb R^n)}<\infty, & q=\infty.\end{cases}
$$

\hspace{-14pt}{\bf Definition of $(H^\varphi,q)$-atom.}
 Let $\varphi$ be a growth function, and let $q \in(q(\varphi), \infty]$. A measurable function $a$ is said to be an $(H^{\varphi}, q)$-atom related to the ball $B \subset \mathbb{R}^n$ if\\
${\rm(i)}\;\operatorname{supp} a \subset B$,\\
${\rm(ii)}\;\|a\|_{L_{\varphi}^q(B)} \leq\|\chi_B\|_{L^{\varphi}(\mathbb{R}^n)}^{-1}$,\\
${\rm(iii)}\;\int_{\mathbb{R}^n} a(x) d x=0$.

\begin{lemma}\label{2.2}\rm(\hspace{0.5pt}\cite{YLK})
Let $\varphi$ be a growth function, $\mathcal{X}$ be is a quasi-Banach space. Suppose that one of the following holds:

 {\rm(i)}\ $q\in (q(\varphi),\infty)$ and $T$ is a sublinear operator defined on the space of all finite linear combinations of $(H^\varphi,q)-atoms$ with the property that
$$\sup \{  \| Ta \|  _{\mathcal{X} } :\text{a is an } (H^\varphi,q)-atom \} \le C.$$

{\rm(ii)}\ $T$ is a sublinear operator defined on the space of all finite linear combinations of continuous  $(H^\varphi,\infty)-atoms$ with the property that
$$\sup \{  \| Ta \|  _{\mathcal{X} } :\text{a is a continuous } (H^\varphi,\infty)-atom \} \le C.$$
Then, $T$ admits a unique continuous extension to a bounded sublinear operator from  $H^{\varphi } (\mathbb{R}^n)$ into $\mathcal{X}$.
\end{lemma}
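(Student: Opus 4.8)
The plan is to promote the uniform control on atoms to boundedness on all of $H^{\varphi}(\mathbb{R}^n)$ by first proving the estimate on the dense subspace of finite linear combinations of atoms and then extending by continuity, using the completeness of $\mathcal{X}$. For $q\in(q(\varphi),\infty]$, let $H^{\varphi,q}_{\mathrm{fin}}(\mathbb{R}^n)$ be the space of finite sums $f=\sum_{j=1}^{N}\lambda_j a_j$ with each $a_j$ an $(H^\varphi,q)$-atom associated with a ball $B_j$ (a \emph{continuous} $(H^\varphi,\infty)$-atom when $q=\infty$), equipped with the natural finite-atomic quasi-norm $\|\cdot\|_{H^{\varphi,q}_{\mathrm{fin}}}$, obtained by taking, over all finite atomic representations of $f$, the infimum of $\inf\{\lambda>0:\sum_{j}\varphi(B_j,|\lambda_j|/(\lambda\|\chi_{B_j}\|_{L^{\varphi}(\mathbb{R}^n)}))\le1\}$. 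The first ingredient is the finite atomic decomposition theorem for Musielak--Orlicz Hardy spaces (see \cite{YLK}; it rests on the ideas of Bownik and of Meda--Sj\"ogren--Vallarino in the classical case): the quasi-norms $\|\cdot\|_{H^{\varphi,q}_{\mathrm{fin}}}$ and $\|\cdot\|_{H^{\varphi}(\mathbb{R}^n)}$ are equivalent on the space of finite linear combinations of $(H^\varphi,q)$-atoms when $q<\infty$, and on that of finite linear combinations of continuous $(H^\varphi,\infty)$-atoms when $q=\infty$; in either case this subspace is dense in $H^{\varphi}(\mathbb{R}^n)$. It therefore suffices to prove $\|Tf\|_{\mathcal X}\le C\|f\|_{H^{\varphi,q}_{\mathrm{fin}}}$ for every such finite $f$, after which $T$ admits a unique continuous extension to $H^{\varphi}(\mathbb{R}^n)$.

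To estimate $\|Tf\|_{\mathcal X}$ for a finite combination $f$ normalized so that $\|f\|_{H^{\varphi,q}_{\mathrm{fin}}}=1$, I would discard the given representation and re-decompose $f$ via the Calder\'on--Zygmund decomposition adapted to its grand maximal function $\mathcal{M}f$ (comparable to $\mathcal{M}_\phi f$) at the dyadic heights $2^k$, $k\in\mathbb{Z}$. This produces $f=\sum_{k\in\mathbb{Z}}\sum_{i}\lambda_{k,i}a_{k,i}$, where each $a_{k,i}$ is a fixed multiple of an atom of the required type supported in a ball $B_{k,i}$, the families $\{B_{k,i}\}_{i}$ have bounded overlap and lie in $\Omega_k:=\{x:\mathcal{M}f(x)>2^k\}$, and $\lambda_{k,i}\approx 2^k\|\chi_{B_{k,i}}\|_{L^{\varphi}(\mathbb{R}^n)}$. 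Let $\gamma\in(0,1]$ be an Aoki--Rolewicz exponent of $\mathcal{X}$, so that $\|\cdot\|_{\mathcal X}^{\gamma}$ is subadditive after an equivalent renorming; decreasing $\gamma$ if necessary, we may assume in addition that $\varphi$ is of uniformly lower type $\gamma$. Then, by the sublinearity of $T$, the bound $\|Ta\|_{\mathcal X}\le C$ valid uniformly over admissible atoms $a$, and quasi-subadditivity,
$$\|Tf\|_{\mathcal X}^{\gamma}\ \le\ \sum_{k,i}|\lambda_{k,i}|^{\gamma}\,\|Ta_{k,i}\|_{\mathcal X}^{\gamma}\ \le\ C\sum_{k,i}|\lambda_{k,i}|^{\gamma}.$$
The remaining task is to show $\sum_{k,i}|\lambda_{k,i}|^{\gamma}\le C$; this is a careful but essentially routine computation that combines the identity $\varphi(B_{k,i},1/\|\chi_{B_{k,i}}\|_{L^{\varphi}(\mathbb{R}^n)})=1$ characterizing $\|\chi_{B_{k,i}}\|_{L^{\varphi}(\mathbb{R}^n)}$, the uniform lower and upper type of $\varphi$, the condition $\varphi\in\mathbb{A}_\infty$, the bounded overlap $\sum_{i}\chi_{B_{k,i}}\lesssim\chi_{\Omega_k}$ (which fails for an arbitrary atomic decomposition and is the whole reason for re-decomposing), and the layer-cake comparison $\sum_k\int_{\Omega_k}\varphi(x,2^k)\,dx\lesssim\int_{\mathbb{R}^n}\varphi(x,\mathcal{M}f(x))\,dx\lesssim1$. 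Homogeneity in $f$ then yields $\|Tf\|_{\mathcal X}\le C\|f\|_{H^{\varphi,q}_{\mathrm{fin}}}$ for all finite $f$.

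The step I expect to be the genuine obstacle is the justification of the termwise identity $Tf=\sum_{k,i}\lambda_{k,i}\,Ta_{k,i}$ in $\mathcal{X}$: a priori $T$ is only sublinear and is defined on finite linear combinations of atoms, whereas the Calder\'on--Zygmund decomposition above is, in general, an infinite sum. For $q\in(q(\varphi),\infty)$ one shows that this decomposition of a finite combination of $q$-atoms converges to $f$ in $L^{\varphi}(\mathbb{R}^n)$ and in $H^{\varphi}(\mathbb{R}^n)$, with the partial sums converging in $L^q$ on each ball; this, together with the estimate on finite partial sums obtained above, forces the termwise identity. For $q=\infty$ the argument collapses for general (possibly discontinuous) $(H^\varphi,\infty)$-atoms --- this is exactly the phenomenon behind Bownik's counterexample --- and it is precisely to circumvent it that the hypothesis admits only \emph{continuous} $\infty$-atoms, for then the Calder\'on--Zygmund pieces are continuous and a uniform-convergence argument restores the identity. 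Once the termwise identity is secured, the coefficient estimate of the previous paragraph, together with the finite atomic norm equivalence and density, yields the asserted unique continuous extension of $T$ from $H^{\varphi}(\mathbb{R}^n)$ into $\mathcal{X}$.
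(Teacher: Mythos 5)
This lemma is not proved in the paper at all: it is quoted verbatim from \cite{YLK} (see also Ky's original paper \cite{K2014}, Theorems 3.4--3.5), so the only fair comparison is with the proof given there. Your sketch reproduces the architecture of that proof correctly: the finite atomic decomposition theorem (equivalence of $\|\cdot\|_{H^{\varphi,q}_{\mathrm{fin}}}$ with $\|\cdot\|_{H^\varphi}$ on finite combinations of $(H^\varphi,q)$-atoms for $q<\infty$, resp.\ of \emph{continuous} $(H^\varphi,\infty)$-atoms), the Calder\'on--Zygmund decomposition along the level sets of the grand maximal function with coefficients $\lambda_{k,i}\approx 2^k\|\chi_{B_{k,i}}\|_{L^\varphi(\mathbb{R}^n)}$ and bounded overlap, and extension by density. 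You also correctly isolate the two genuine subtleties: the justification of the termwise identity $Tf=\sum_{k,i}\lambda_{k,i}Ta_{k,i}$ for an operator defined only on finite combinations, and the fact that continuity of the $\infty$-atoms is exactly what rescues case (ii) from Bownik's counterexample.

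The one place where your sketch is not merely incomplete but points in the wrong direction is the coefficient estimate $\sum_{k,i}|\lambda_{k,i}|^{\gamma}\le C$, which you call routine. The layer-cake argument yields $\sum_{k,i}\varphi(B_{k,i},2^k)\lesssim\int_{\mathbb{R}^n}\varphi(x,\mathcal{M}f(x))\,dx\lesssim 1$, and to convert this into $\sum_{k,i}|\lambda_{k,i}|^{\gamma}\lesssim 1$ you must dominate $t^{\gamma}$ by $\varphi(B_{k,i},t/\|\chi_{B_{k,i}}\|_{L^\varphi(\mathbb{R}^n)})$ with $t=2^k\|\chi_{B_{k,i}}\|_{L^\varphi(\mathbb{R}^n)}$. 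For $t\ge1$ this follows from uniformly \emph{lower} type $\gamma$, which is what your normalization provides; but for $t\le1$ it requires $\varphi(x,su)\ge c\,s^{\gamma}\varphi(x,u)$ for $s\le1$, i.e.\ a uniformly \emph{upper} type $\gamma$ bound. Upper type is inherited upward, not downward, so ``decreasing $\gamma$'' makes this condition harder to satisfy, not easier; a growth function is only guaranteed upper type $1$, and $\sum t_{k,i}\le C$ does not imply $\sum t_{k,i}^{\gamma}\le C$ for $\gamma<1$. This is precisely where the compatibility between the convexity exponent of $\mathcal{X}$ and the type indices of $\varphi$ (encoded in \cite{YLK} through the precise notion of a sublinear operator into a quasi-Banach space, and harmless in the present paper where $\mathcal{X}$ is $L^{\varphi_2}(\mathbb{R}^n)$ or $H^{\varphi_2}(\mathbb{R}^n)$) actually enters, and your argument needs either that hypothesis made explicit or the truncation device from the finite atomic decomposition theorem that absorbs the low-level part of the decomposition into a single atom.
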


\begin{lemma}\label{2.3}\rm(\hspace{0.5pt}\cite{MOH})
    Let $q \in[1, \infty), r \in(1, \infty)$, and let $\varphi \in {\mathbb{A}}_q(\mathbb{R}^n) \cap \mathbb{RH}_r(\mathbb{R}^n)$ be of uniformly lower type $p$. Then,
$$
\Big(\frac{|E|}{|B|}\Big)^{q / p} \le C\frac{\|\chi_E\|_{L^{\varphi}(\mathbb{R}^n)}}{\|\chi_B\|_{L^{\varphi}(\mathbb{R}^n)}} \le C\Big(\frac{|E|}{|B|}\Big)^{1-1 / r}
$$
for any ball $B \subset \mathbb{R}^n$ and measurable subset $E \subset B$.
\end{lemma}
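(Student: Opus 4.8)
The plan is to deduce both inequalities from two $t$-uniform comparisons between the averages $\varphi(A,t):=\int_A\varphi(x,t)\,dx$ over $E$ and over $B$, and then convert these into estimates for the Luxemburg quasi-norms $\lambda_A:=\|\chi_A\|_{L^\varphi(\mathbb{R}^n)}$ by a rescaling argument driven by the uniform type conditions. We may assume $|E|>0$; recall that $0<\lambda_A<\infty$ for any measurable $A$ of finite positive measure and, by the attainment property of the Luxemburg quasi-norm, $\varphi(A,1/\lambda_A)=1$ (used below for $A=E$ and $A=B$; alternatively one works with $\varphi(A,1/\lambda_A)\le1$ and $\varphi(A,1/\mu)>1$ for $\mu<\lambda_A$, a minor nuisance addressed at the end). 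First I would prove that there are positive constants $C_1,C_2$, depending only on $n,q,r$ and on the constants of $\varphi$ in the $\mathbb{A}_q$ and $\mathbb{RH}_r$ conditions, such that
$$C_1^{-1}\left(\frac{|E|}{|B|}\right)^{q}\varphi(B,t)\ \le\ \varphi(E,t)\ \le\ C_2\left(\frac{|E|}{|B|}\right)^{1-1/r}\varphi(B,t)\qquad\text{for all }t>0.$$
The right inequality is Hölder's inequality $\varphi(E,t)\le\big(\int_B[\varphi(x,t)]^r\,dx\big)^{1/r}|E|^{1-1/r}$ followed by the definition of $\mathbb{RH}_r$; the left one (for $q>1$) follows by writing $|E|=\int_E[\varphi(x,t)]^{1/q}[\varphi(x,t)]^{-1/q}\,dx$, applying Hölder with exponents $q$ and $q'$, enlarging the $[\varphi(\cdot,t)]^{-1/(q-1)}$-integral from $E$ to $B$, and invoking $\mathbb{A}_q$ (for $q=1$ use $\mathbb{A}_1$ and $\varphi(E,t)\ge|E|\,\operatorname*{ess\,inf}_{x\in B}\varphi(x,t)$).

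For the lower bound, I would test $\lambda_E$ against the radius $\Lambda_0:=c_0\big(|E|/|B|\big)^{q/p}\lambda_B$, where $c_0\in(0,1]$ is small and to be chosen. Since $\Lambda_0\le\lambda_B$, the uniform lower type $p$, applied with $s:=\Lambda_0/\lambda_B\le1$, gives $\varphi(x,1/\Lambda_0)\ge C^{-1}s^{-p}\,\varphi(x,1/\lambda_B)$ pointwise; integrating this over $E$, then using the left inequality of the display at $t=1/\lambda_B$ together with $\varphi(B,1/\lambda_B)=1$, and finally substituting $s=c_0(|E|/|B|)^{q/p}$, all powers of $|E|/|B|$ cancel and one is left with $\varphi(E,1/\Lambda_0)\ge c_0^{-p}/(C\,C_1)$. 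Choosing $c_0$ small enough that the right-hand side exceeds $1$ forces $\lambda_E\ge\Lambda_0$, i.e. $(|E|/|B|)^{q/p}\le c_0^{-1}\lambda_E/\lambda_B$, which is the left inequality of the Lemma.

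For the upper bound, set $\sigma:=(|E|/|B|)^{1-1/r}\in(0,1]$ and $\Lambda_1:=C_3\,\sigma\,\lambda_B$ with $C_3:=C_2C_U$, where $C_U$ is the constant in the uniform upper type $1$ condition for $\varphi$. I would split on the size of $\sigma$. If $C_3\sigma\ge1$, then by monotonicity of the quasi-norm $\lambda_E\le\lambda_B\le C_3\sigma\lambda_B=\Lambda_1$. If $C_3\sigma<1$, then $1/\Lambda_1=(C_3\sigma)^{-1}(1/\lambda_B)$ with $(C_3\sigma)^{-1}\ge1$, so uniform upper type $1$ gives $\varphi(B,1/\Lambda_1)\le C_U(C_3\sigma)^{-1}\varphi(B,1/\lambda_B)=C_U(C_3\sigma)^{-1}$, and combining with the right inequality of the display at $t=1/\Lambda_1$ yields $\varphi(E,1/\Lambda_1)\le C_2\sigma\cdot C_U(C_3\sigma)^{-1}=1$, hence $\lambda_E\le\Lambda_1$. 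In either case $\lambda_E/\lambda_B\le C_3(|E|/|B|)^{1-1/r}$, which is the right inequality of the Lemma.

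The only step that is not essentially mechanical is the regime of the upper bound in which $E$ is small relative to $B$: there one cannot conclude by monotonicity of $\varphi(x,\cdot)$, and it is precisely the uniform upper type $1$ property — automatic for growth functions — that is needed to control $\varphi(B,\cdot)$ after the radius has been rescaled by the small factor $\sigma$; a weaker upper type $\tilde q$ would only give the exponent $(1-1/r)/\tilde q$. The remaining point requiring a little care is the identity $\varphi(A,1/\lambda_A)=1$: it is immediate for the continuous Orlicz functions one uses, and in general one argues with $\varphi(A,1/\lambda_A)\le1$, with $\varphi(A,1/\mu)>1$ for $\mu<\lambda_A$, and a limit $\mu\to\lambda_A$, using that $\varphi(A,t)<\infty$ for every $t>0$ (which follows from $\varphi(\cdot,1)\in L^1_{\mathrm{loc}}$ and the uniform upper type bound).
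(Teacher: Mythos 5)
The paper does not prove this lemma; it is quoted verbatim from \cite{MOH}, so there is no in-paper argument to compare against. Your proof is correct and is essentially the standard one: first the $t$-uniform measure comparison $C^{-1}(|E|/|B|)^{q}\varphi(B,t)\le\varphi(E,t)\le C(|E|/|B|)^{1-1/r}\varphi(B,t)$ (which is exactly Lemma \ref{2.4} of the paper, obtained from H\"older plus the $\mathbb{A}_q$ and $\mathbb{RH}_r$ conditions), and then the transfer to Luxemburg quasi-norms by testing the candidate radii $c_0(|E|/|B|)^{q/p}\lambda_B$ and $C_3(|E|/|B|)^{1-1/r}\lambda_B$ against the modular, using the uniformly lower type $p$ and upper type $1$ inequalities respectively; the exponent bookkeeping in both regimes checks out, and your handling of the attainment issue $\varphi(A,1/\lambda_A)=1$ via a limiting argument is adequate. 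One point worth making explicit: the right-hand inequality genuinely requires the uniformly upper type $1$ hypothesis, which is not in the literal statement of the lemma (only lower type $p$ is assumed) but is part of the standing growth-function assumption; without it the claim fails (e.g.\ $\varphi(x,t)=t^{m}$ with $m$ large lies in every $\mathbb{A}_q\cap\mathbb{RH}_r$ and gives $\|\chi_E\|/\|\chi_B\|=(|E|/|B|)^{1/m}$, violating the upper bound). You correctly identified and flagged this, so the argument stands as written for the growth functions to which the lemma is applied in this paper.
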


\begin{lemma}\label{2.4}\rm(\hspace{0.5pt}\cite{MOH})
If $\varphi \in \mathbb{A}_{\infty} (\mathbb{R}^n)$, then there exist $q\in[1,\infty)$ and $r\in(1,\infty)$ such that $\varphi \in \mathbb{A}_q(\mathbb{R}^n)\cap \mathbb{RH}_{r}(\mathbb{R}^n)$, moreover,\\
$$\Big(\frac{|E|}{|B|}\Big)^{q} \le C\frac{\varphi(E, t)}{\varphi(B, t)} \le C\Big(\frac{|E|}{|B|}\Big)^{1-1 / r}$$\\
for all $t\in[0,\infty)$, balls $B\subset \mathbb{R}^n$, and measurable subset $E\subset B$.
\end{lemma}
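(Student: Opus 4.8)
The plan is to reduce Lemma~\ref{2.4} to the classical Muckenhoupt and reverse-Hölder theory for ordinary weights, applied to the weights $w_t:=\varphi(\cdot,t)$ for each fixed $t\in(0,\infty)$, while keeping track that every constant produced depends on the parameters only through the $\mathbb{A}_q$ constant of $\varphi$ and hence is independent of $t$. Throughout, $\varphi(E,t):=\int_E\varphi(x,t)\,dx$ is the $w_t$-measure of $E$, and the target is the two-sided comparison of $\varphi(E,t)/\varphi(B,t)$ with powers of $|E|/|B|$.

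First I would fix the exponents. Since $\varphi\in\mathbb{A}_\infty=\bigcup_{q\in[1,\infty)}\mathbb{A}_q$, there is $q\in[1,\infty)$ with $\varphi\in\mathbb{A}_q$; unravelling the definition, this says precisely that for every $t$ the weight $w_t$ is a classical $A_q$ weight whose $A_q$ constant is bounded by $[\varphi]_{\mathbb{A}_q}$ (the infimum of the admissible constants $C$ in the definition of $\mathbb{A}_q$), uniformly in $t$. Next I invoke the quantitative self-improvement of $A_q$: there exist $r=r(n,q,[\varphi]_{\mathbb{A}_q})\in(1,\infty)$ and $C=C(n,q,[\varphi]_{\mathbb{A}_q})$, depending on the parameters only through $n$, $q$ and the $A_q$ constant, so that every classical $A_q$ weight with $A_q$ constant $\le[\varphi]_{\mathbb{A}_q}$ satisfies the reverse-Hölder inequality of order $r$ with constant $C$. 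Applying this to each $w_t$ and taking the supremum over $t$ and over balls gives $[\varphi]_{\mathbb{RH}_r}\le C<\infty$, i.e. $\varphi\in\mathbb{RH}_r$.

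With $q$ and $r$ in hand, both displayed inequalities are bookkeeping with Hölder's inequality for the fixed weight $w_t$. For the lower bound when $q>1$, for a ball $B$ and measurable $E\subset B$ I would write
\[
|E|=\int_E w_t(x)^{1/q}w_t(x)^{-1/q}\,dx\le\Big(\int_E w_t\,dx\Big)^{1/q}\Big(\int_B w_t^{-1/(q-1)}\,dx\Big)^{(q-1)/q},
\]
and then bound $\int_B w_t^{-1/(q-1)}\,dx\le C\,|B|^{q/(q-1)}\varphi(B,t)^{-1/(q-1)}$ by the $\mathbb{A}_q$ condition, which rearranges to $(|E|/|B|)^q\le C\,\varphi(E,t)/\varphi(B,t)$; the case $q=1$ is identical with $\operatorname{ess\,inf}_{x\in B}w_t(x)$ in place of the $L^{-1/(q-1)}$ average. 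For the upper bound, Hölder's inequality and the $\mathbb{RH}_r$ condition give
\[
\varphi(E,t)=\int_E w_t\,dx\le|E|^{1-1/r}\Big(\int_B w_t^{\,r}\,dx\Big)^{1/r}\le[\varphi]_{\mathbb{RH}_r}\,\Big(\frac{|E|}{|B|}\Big)^{1-1/r}\varphi(B,t),
\]
which is the second inequality.

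The main obstacle is the second paragraph: the whole content of the lemma is the \emph{uniformity in $t$}, so one must quote (or reprove) the reverse-Hölder theorem in the form where the order $r$ and the constant are controlled solely by the $A_q$ constant --- not merely by the individual weight --- so that the $t$-uniformity built into the definition of $\mathbb{A}_q$ transfers to $\mathbb{RH}_r$. If one prefers not to cite this quantitative statement, it can be reproved directly by the Calderón--Zygmund stopping-time argument run on an arbitrary ball, again carrying all constants in terms of $[\varphi]_{\mathbb{A}_q}$ only; everything else in the proof is elementary.
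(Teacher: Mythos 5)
Your proof is correct. The paper gives no proof of Lemma \ref{2.4} (it is quoted from the cited reference \cite{MOH}), and your argument --- fix $q$ with $\varphi\in\mathbb{A}_q$, so that each $w_t=\varphi(\cdot,t)$ is a classical $A_q$ weight with characteristic bounded uniformly in $t$; invoke the quantitative reverse H\"older theorem, whose exponent $r$ and constant depend only on $n$, $q$ and the $A_q$ characteristic, to get $\varphi\in\mathbb{RH}_r$ uniformly in $t$; then apply H\"older's inequality with the $\mathbb{A}_q$ and $\mathbb{RH}_r$ conditions for each fixed $t$ --- is exactly the standard route, with the one genuinely substantive point (uniformity in $t$ of the reverse H\"older self-improvement) correctly identified and handled, and with the H\"older computations in your third paragraph checking out.
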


\begin{lemma}\label{2.1}\rm(\hspace{0.5pt}\cite{MOH})
     Let $\varphi$ be a growth function and $q \in(q(\varphi), \infty]$. Then,\\
$$\int_B \varphi(x,|f(x)|) d x \leq C \varphi(B,\|f\|_{L_{\varphi}^q(B)})$$
for all ball $B \subset \mathbb{R}^n, f \in L_{\varphi}^q(B)$.
\end{lemma}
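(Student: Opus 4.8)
The plan is to compare $\int_B\varphi(x,|f(x)|)\,dx$ directly with $\varphi(B,\lambda)$, where $\lambda:=\|f\|_{L^q_\varphi(B)}$, by splitting the ball $B$ according to whether $|f(x)|\le\lambda$ or $|f(x)|>\lambda$ and, on the bad set, invoking the uniformly upper type $1$ estimate that is built into the definition of a growth function. We may assume $\lambda\in(0,\infty)$, since the case $\lambda=0$ (equivalently $f=0$ a.e.\ on $B$) is trivial because $\varphi(x,0)=0$, and $\varphi(B,\lambda)$ is finite and positive for every $\lambda>0$ since $\varphi(\cdot,\lambda)$ is locally integrable and pointwise positive.

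First I would dispose of $q=\infty$: here $|f(x)|\le\lambda$ for a.e.\ $x\in B$, and since each $\varphi(x,\cdot)$ is nondecreasing, $\varphi(x,|f(x)|)\le\varphi(x,\lambda)$ pointwise, so $\int_B\varphi(x,|f(x)|)\,dx\le\varphi(B,\lambda)$ with constant $1$. For $q\in(q(\varphi),\infty)$ (hence $q>1$), put $B_1:=\{x\in B:|f(x)|\le\lambda\}$ and $B_2:=\{x\in B:|f(x)|>\lambda\}$. On $B_1$ the same monotonicity gives $\int_{B_1}\varphi(x,|f(x)|)\,dx\le\varphi(B,\lambda)$.

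The substantive step is the contribution of $B_2$. There $|f(x)|/\lambda>1$, so the uniformly upper type $1$ property furnishes a constant $C$, independent of $x$ and of the ball, with $\varphi(x,|f(x)|)=\varphi\Big(x,\frac{|f(x)|}{\lambda}\,\lambda\Big)\le C\,\frac{|f(x)|}{\lambda}\,\varphi(x,\lambda)$. Integrating over $B_2$, enlarging the domain to $B$, and applying Hölder's inequality with exponents $q$ and $q/(q-1)$, I would obtain
$$\int_{B_2}\varphi(x,|f(x)|)\,dx\le\frac{C}{\lambda}\Big(\int_B|f(x)|^q\,\varphi(x,\lambda)\,dx\Big)^{1/q}\varphi(B,\lambda)^{1-1/q}.$$
Now the definition of $\|f\|_{L^q_\varphi(B)}$, used with the single value $t=\lambda$, gives exactly $\int_B|f(x)|^q\varphi(x,\lambda)\,dx\le\lambda^q\varphi(B,\lambda)$, so the right-hand side is at most $C\varphi(B,\lambda)$. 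Adding the $B_1$ and $B_2$ estimates yields $\int_B\varphi(x,|f(x)|)\,dx\le(1+C)\varphi(B,\lambda)$, which is the assertion; only $q\ge1$ is used, the Hölder step being vacuous in the endpoint $q=1$.

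I do not expect a genuine obstacle here; the argument is elementary. The one point that must be watched is that the constant in the upper-type bound be uniform in the spatial variable $x$ and independent of the ball --- which is precisely the content of the word ``uniformly'' in the hypothesis that $\varphi$ is of uniformly upper type $1$. The condition $\varphi\in\mathbb{A}_\infty$ enters only through $q>q(\varphi)\ge1$, ensuring $q\ge1$ so that Hölder's inequality is available.
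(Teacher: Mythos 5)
Your argument is correct and complete: splitting $B$ according to $|f(x)|\le\lambda$ versus $|f(x)|>\lambda$ with $\lambda=\|f\|_{L^q_\varphi(B)}$, using monotonicity on the first set and the uniformly upper type $1$ bound plus H\"older's inequality (with the single test value $t=\lambda$ in the definition of the norm) on the second, is exactly the standard proof of this lemma; the paper itself only cites it from \cite{MOH}, where the argument runs along the same lines. The side points you flag --- uniformity of the upper-type constant in $x$, positivity and finiteness of $\varphi(B,\lambda)$, and the trivial case $\lambda=0$ --- are handled correctly.
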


\subsection{$\mathcal{BMO}_{\varphi,u}(\mathbb R^n)$ space}
Given $\varphi\in \mathbb A_\infty$ be a growth function with $\int_{\mathbb{R}^n}\varphi(x,(1+|x|)^{-n})dx<\infty$ and  $u\in(1,\infty]$, a locally integrable function $b\in L_{loc}^1(\mathbb{R}^n)$ is said to be in $\mathcal{BMO}_{\varphi,u}(\mathbb{R}^n)$ if
$$\|b\|_{\mathcal{BMO}_{\varphi, u}}:=\sup _{B \subset \mathbb{R}^n}\Big\{\frac{|B|^{1 / u}}{\|\chi_B\|_{L^{\varphi}(\mathbb{R}^n)}}(\int_B|b(x)-b_B|^{u'} d x)^{1 / u'}\||\cdot-x_B|^{-n}\|_{L^{\varphi}(B^c)}\Big\}<\infty,$$
where $b_B:=\frac{1}{|B|}\int_B|b(x)|dx$ and the supremum is taken over all balls $B:=B(x_B,r_B)\subset\mathbb{R}^n$.

A locally integrable function $b$ is said to be in $\mathrm{BMO}(\mathbb{R}^n)$ if
$$\|b\|_{\mathrm{BMO}(\mathbb{R}^n)}:=\sup\limits_{B\subset{\mathbb{R}^n}}
\frac{1}{|B|}\int_B|b(x)-b_B|dx<\infty.$$

By the definition of ${\mathcal{BMO}_{\varphi,u}(\mathbb{R}^n)}$, we have the following basic facts.

\medskip

{\bf Basic facts.}

$(\mathrm{i})$ For $\varphi\in \mathbb A_\infty, u=\infty$, we denote $\mathcal{BMO}_\varphi(\mathbb{R}^n)$ by $\mathcal{BMO}_{\varphi,\infty}(\mathbb{R}^n)$, which were introduced in \cite{MOH}. Consider $\varphi(x,t)=\omega(x)t^{p}$ where $p\in(0,1]$, $\omega\in \rm A_\infty(\mathbb R^n)$, we denote $\mathcal{BMO}_{\varphi,u}(\mathbb{R}^n)$ by $\mathcal{BMO}_{\omega,p,u}(\mathbb{R}^n)$ which were introduced in \cite{HanWu} .

$(\mathrm{ii})$ Let $\varphi$ be a growth function with $\int_{\mathbb R^n}{\varphi (x,(1+|x|)^{-n}})dx<\infty$. If $1<u_1<u_2<\infty$, then
 \begin{align*}
\mathcal{BMO}_{\varphi,u_1}(\mathbb R^n)\subset\mathcal{BMO}_{\varphi,u_2}(\mathbb R^n)
\subset\mathcal{BMO}_{\varphi}(\mathbb R^n)\subset \mathrm{BMO}(\mathbb R^n).
 \end{align*}

\begin{lemma}\label{2.6}\rm(\hspace{0.5pt}\cite{MOH})
Let $\varphi$ be a growth function. Then, for all balls $B\subset\mathbb{R}^n$, and $b\in\mathrm{BMO}(\mathbb{R}^n)$,
$$
    \int_B\varphi(x,|b(x)-b_B|)dx\le C\varphi(B,\|b\|_{\mathrm{BMO}(\mathbb{R}^n)}).
$$
\end{lemma}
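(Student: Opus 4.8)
The plan is to combine the John--Nirenberg inequality for $\mathrm{BMO}(\mathbb{R}^n)$ with two defining features of a growth function: its uniformly upper type $1$ bound, and the quantitative $\mathbb{A}_\infty$ comparison of Lemma \ref{2.4}. First I would fix a ball $B$, write $t_0:=\|b\|_{\mathrm{BMO}(\mathbb{R}^n)}$ (assuming $t_0>0$, the case $t_0=0$ being trivial since then $b=b_B$ a.e.\ on $B$), and split $B$ into $B_1:=\{x\in B:|b(x)-b_B|\le t_0\}$ and $B_2:=B\setminus B_1$. On $B_1$, monotonicity of $\varphi(x,\cdot)$ gives immediately $\int_{B_1}\varphi(x,|b(x)-b_B|)\,dx\le\int_B\varphi(x,t_0)\,dx=\varphi(B,t_0)$, which already has the desired shape.

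For the contribution of $B_2$ I would decompose it into the dyadic level sets $E_k:=\{x\in B:2^k t_0<|b(x)-b_B|\le 2^{k+1}t_0\}$, $k\ge0$, so that $B_2=\bigcup_{k\ge0}E_k$ up to a null set, and invoke John--Nirenberg to obtain absolute constants $c_1,c_2>0$ with $|E_k|\le|\{x\in B:|b(x)-b_B|>2^k t_0\}|\le c_1e^{-c_2 2^k}|B|$. On $E_k$ we have $\varphi(x,|b(x)-b_B|)\le\varphi(x,2^{k+1}t_0)\le C\,2^{k+1}\varphi(x,t_0)$ by uniformly upper type $1$ (valid since $2^{k+1}\ge1$), so
\[
\int_{B_2}\varphi(x,|b(x)-b_B|)\,dx\le C\sum_{k\ge0}2^{k+1}\varphi(E_k,t_0),
\]
where $\varphi(E_k,t_0):=\int_{E_k}\varphi(x,t_0)\,dx$. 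Then I would apply Lemma \ref{2.4} (legitimate because $\varphi\in\mathbb{A}_\infty$): for some $r\in(1,\infty)$ one has $\varphi(E_k,t_0)\le C\,\varphi(B,t_0)(|E_k|/|B|)^{1-1/r}\le C\,\varphi(B,t_0)(c_1e^{-c_2 2^k})^{1-1/r}$, hence
\[
\int_{B_2}\varphi(x,|b(x)-b_B|)\,dx\le C\,\varphi(B,t_0)\sum_{k\ge0}2^{k+1}e^{-c_2(1-1/r)2^k}.
\]
The series on the right converges since its terms decay double-exponentially, so this contribution is also bounded by $C\varphi(B,\|b\|_{\mathrm{BMO}(\mathbb{R}^n)})$; adding the two pieces gives the claim.

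I expect the only real subtlety to be the lack of homogeneity of $\varphi$ in the second variable, which forbids the usual normalization $\|b\|_{\mathrm{BMO}}=1$. This is exactly why the argument must run through the uniformly upper type $1$ estimate (to trade the dilation factor $2^{k+1}$ for a harmless multiple of $2^{k+1}$) and why Lemma \ref{2.4} has to be applied at the fixed level $t=t_0$ rather than after any rescaling; everything else is a routine layer-cake summation.
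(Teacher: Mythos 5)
Your argument is correct: the splitting at the level $t_0=\|b\|_{\mathrm{BMO}}$, the dyadic layer-cake with John--Nirenberg, the uniformly upper type $1$ bound to absorb the factor $2^{k+1}$, and the reverse-H\"older half of Lemma \ref{2.4} applied at the fixed level $t=t_0$ all fit together, and the resulting series converges. The paper itself gives no proof (the lemma is quoted from \cite{MOH}), but your argument is exactly the standard one used there, so nothing further is needed.
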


\begin{lemma}\label{2.7}\rm(\hspace{0.5pt}\cite{MOH})
Let $\varphi$ be a growth function with $\int_{\mathbb R^n}{\varphi (x,(1+|x|)^{-n}})dx<\infty$. Then, for all $b\in {\mathcal{B M O}_{\varphi}(\mathbb{R}^n)}$ and all $(H^\varphi,\infty)$-atom $a$ related to the ball $B:=B(x_B,r_B)$, we have
\begin{align*}
\|(b-b_B)a\|_{H^\varphi(\mathbb{R}^n)}\le C\|b\|_{\mathcal{B M O}_\varphi(\mathbb{R}^n)}.
\end{align*}
\end{lemma}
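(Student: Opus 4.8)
The plan is to use the maximal‑function characterization $\|h\|_{H^\varphi(\mathbb R^n)}=\|\mathcal{M}_\phi h\|_{L^\varphi(\mathbb R^n)}$ and to estimate $\mathcal{M}_\phi f$, where $f:=(b-b_B)a$ is supported in $B$, separately on $2B$ and on $(2B)^c$; since the Luxemburg quasi‑norm obeys a quasi‑triangle inequality it suffices to control $\|\mathcal{M}_\phi f\|_{L^\varphi(2B)}$ and $\|\mathcal{M}_\phi f\|_{L^\varphi((2B)^c)}$. Two elementary observations are used throughout. First, since $\phi\in\mathcal S$, $\sup_{t>0}|\phi_t(z)|\lesssim|z|^{-n}$ for all $z\neq0$, while $|x-y|\approx|x-x_B|$ whenever $x\notin 2B$ and $y\in B$; hence $\sup_{t>0}|\phi_t(x-y)|\lesssim|x-x_B|^{-n}$ for such $x,y$. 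Second, $\|b\|_{\mathrm{BMO}(\mathbb R^n)}\lesssim\|b\|_{\mathcal{BMO}_\varphi(\mathbb R^n)}$: this follows by combining the defining inequality of $\mathcal{BMO}_\varphi$ with the lower bound $\big\||\cdot-x_B|^{-n}\big\|_{L^\varphi(B^c)}\gtrsim|B|^{-1}\|\chi_B\|_{L^\varphi(\mathbb R^n)}$, obtained by restricting to the annulus $2B\setminus B$ (where $|x-x_B|^{-n}\gtrsim|B|^{-1}$) and applying Lemma \ref{2.3} to $2B\setminus B\subset 2B$. The hypothesis $\int_{\mathbb R^n}\varphi(x,(1+|x|)^{-n})\,dx<\infty$ guarantees $\big\||\cdot-x_B|^{-n}\big\|_{L^\varphi(B^c)}<\infty$ for every ball $B$, so all quantities below are finite.

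On $(2B)^c$ the estimate is soft. For $x\in(2B)^c$ and $t>0$, the first observation and $\|a\|_{L^\infty}\le\|\chi_B\|_{L^\varphi}^{-1}$ give
\[
|f*\phi_t(x)|\le\int_B|b(y)-b_B|\,|a(y)|\,|\phi_t(x-y)|\,dy\lesssim\frac{1}{\|\chi_B\|_{L^\varphi(\mathbb R^n)}}\Big(\int_B|b(y)-b_B|\,dy\Big)|x-x_B|^{-n}.
\]
Taking the supremum in $t$ and then the $L^\varphi((2B)^c)$‑quasi‑norm (using positive homogeneity of the Luxemburg norm) yields
\[
\|\mathcal{M}_\phi f\|_{L^\varphi((2B)^c)}\lesssim\frac{\int_B|b(y)-b_B|\,dy}{\|\chi_B\|_{L^\varphi(\mathbb R^n)}}\,\big\||\cdot-x_B|^{-n}\big\|_{L^\varphi(B^c)}\le\|b\|_{\mathcal{BMO}_\varphi(\mathbb R^n)},
\]
the last step being precisely the $u=\infty$ (i.e.\ $u'=1$) instance of the quantity defining $\|b\|_{\mathcal{BMO}_\varphi}$.

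On $2B$ I would use $\mathcal{M}_\phi f\le C\,Mf$ ($M$ the Hardy--Littlewood maximal operator) together with $|f|\le\|a\|_{L^\infty}\,|b-b_B|\chi_B\le\|\chi_B\|_{L^\varphi}^{-1}|b-b_B|\chi_B$, reducing matters to
\[
\big\|M\big((b-b_B)\chi_B\big)\big\|_{L^\varphi(2B)}\lesssim\|b\|_{\mathrm{BMO}(\mathbb R^n)}\,\|\chi_B\|_{L^\varphi(\mathbb R^n)}.
\]
Fix $q_0\in(q(\varphi),\infty)$. Then for each $t$ the weight $\varphi(\cdot,t)$ lies in the classical $\mathrm{A}_{q_0}$ class with constant independent of $t$, so $M$ is bounded on $L^{q_0}(\varphi(\cdot,t)\,dx)$ uniformly in $t$; combining this with $\varphi(B,t)\le\varphi(2B,t)$ and the weighted John--Nirenberg estimate $\big(\varphi(B,t)^{-1}\int_B|b-b_B|^{q_0}\varphi(x,t)\,dx\big)^{1/q_0}\lesssim\|b\|_{\mathrm{BMO}}$ (uniform in $t$, since $\varphi(\cdot,t)$ has uniformly bounded $A_\infty$ constant) gives $\big\|M((b-b_B)\chi_B)\big\|_{L_\varphi^{q_0}(2B)}\lesssim\|b\|_{\mathrm{BMO}}$. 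Lemma \ref{2.1}, the normalization $\varphi\big(2B,\|\chi_{2B}\|_{L^\varphi}^{-1}\big)\le 1$, the uniformly lower type of $\varphi$ (to absorb the constant from Lemma \ref{2.1}), and Lemma \ref{2.3} (to replace $\|\chi_{2B}\|_{L^\varphi}$ by $\|\chi_B\|_{L^\varphi}$) then produce the displayed inequality. Hence $\|\mathcal{M}_\phi f\|_{L^\varphi(2B)}\lesssim\|a\|_{L^\infty}\|b\|_{\mathrm{BMO}}\|\chi_B\|_{L^\varphi}\le\|b\|_{\mathrm{BMO}}\lesssim\|b\|_{\mathcal{BMO}_\varphi}$ by the second observation.

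Adding the two parts proves $\|(b-b_B)a\|_{H^\varphi(\mathbb R^n)}\lesssim\|b\|_{\mathcal{BMO}_\varphi(\mathbb R^n)}$; incidentally the moment condition $\int_{\mathbb R^n}a=0$ plays no role in this particular estimate. The \emph{main obstacle} is the local part: one cannot simply cite boundedness of $M$ on $L^\varphi$, which fails in general (a growth function has uniformly upper type $1$ and $M$ is unbounded on $L^1$), and one must instead exploit the uniform‑in‑$t$ $L^{q_0}$‑improving property of $M$ with respect to the weights $\varphi(\cdot,t)$ — this is where the uniform $\mathbb{A}_q$ and reverse H\"older structure of growth functions and Lemmas \ref{2.1}, \ref{2.3} and \ref{2.4} enter. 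The tail estimate, by contrast, is essentially just a restatement of the definition of $\mathcal{BMO}_\varphi$.
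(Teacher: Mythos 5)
The paper does not actually prove Lemma \ref{2.7} --- it is quoted from \cite{MOH} --- but your argument is correct and is essentially the standard proof of that result: decompose $\mathcal{M}_\phi\big((b-b_B)a\big)$ into a local part on $2B$ (controlled via $\mathcal{M}_\phi\lesssim M$, the uniform $\mathbb{A}_{q_0}$ boundedness of $M$, a weighted John--Nirenberg estimate, and Lemmas \ref{2.1} and \ref{2.3}) and a tail on $(2B)^c$ that reduces exactly to the quantity defining $\|b\|_{\mathcal{BMO}_\varphi}$. All the supporting steps check out, including the embedding $\mathcal{BMO}_\varphi(\mathbb{R}^n)\subset\mathrm{BMO}(\mathbb{R}^n)$ and your (correct) remark that the cancellation of $a$ is not used.
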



\begin{lemma}\label{2.9}\rm(\hspace{0.5pt}\cite{Yang})
    Let $f$ be a measurable function such that $\supp f\subset B:=B(x_B,r_B)$ with some $x_B\in\mathbb{R}^n$ and $r_B\in(0,\infty)$. Then there exists a postive constant $C$, depending only on $\phi$ and $n$, such that, for all $x\notin B$,
    \begin{align*}
        \frac{1}{|x-x_0|^n}\Big|\int_{B(x_B,r_B)}f(y)dy\Big|\le C \mathcal{M}_{\phi}f(x).
    \end{align*}
\end{lemma}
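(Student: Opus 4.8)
The plan is to exploit the freedom available in the test function $\phi$. Since in the definition of $H^{\varphi}(\mathbb{R}^n)$ one may take for $\phi$ any Schwartz function with non-vanishing integral, and the resulting space — hence all of our estimates, which take place inside it — is independent of that choice, we fix $\phi$ to be a smooth bump that equals a positive constant $\kappa$ on the ball $B(0,1)$ and is supported in $B(0,2)$. The constant $C$ produced below will then depend only on $n$ and on $\kappa$, that is, only on $n$ and $\phi$, as required.

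With this normalization the proof is a one‑line computation once the dilation parameter is chosen correctly. Let $B=B(x_B,r_B)$, let $f$ be supported in $B$, and let $x\notin B$, so that $|x-x_B|\ge r_B>0$. Set $t:=2|x-x_B|$. For every $y\in B$,
\[
|x-y|\le |x-x_B|+|y-x_B|<|x-x_B|+r_B\le 2|x-x_B|=t,
\]
so $(x-y)/t\in B(0,1)$ and therefore $\phi\big((x-y)/t\big)=\kappa$. Because $\supp f\subset B$, it follows that
\[
f*\phi_t(x)=\int_{B}f(y)\,t^{-n}\phi\Big(\frac{x-y}{t}\Big)\,dy=\kappa\,t^{-n}\int_{B}f(y)\,dy,
\]
and hence
\[
\frac{1}{|x-x_B|^{\,n}}\Big|\int_{B}f(y)\,dy\Big|=\frac{\kappa^{-1}t^{\,n}}{|x-x_B|^{\,n}}\,|f*\phi_t(x)|=\frac{2^{\,n}}{\kappa}\,|f*\phi_t(x)|\le \frac{2^{\,n}}{\kappa}\,\mathcal{M}_{\phi}f(x),
\]
which is the asserted inequality with $C=2^{\,n}/\kappa$. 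I would emphasize that the argument uses no cancellation, no smoothness of $f$, and nothing beyond $f\in L^{1}(B)$; in particular $f$ is not required to have vanishing mean.

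The step I expect to be the only genuine obstacle is the reduction in the first paragraph. For an \emph{arbitrary} $\phi\in\mathcal{S}(\mathbb{R}^n)$ with $\int\phi\neq0$ the pointwise bound is considerably more delicate, since $f*\phi_t(x)$ is a $\phi\big((x-\cdot)/t\big)$‑weighted integral of $f$ over $B$, not the bare integral $\int_{B}f$, and that weight is not constant on $B$. Choosing $\phi$ to be constant near the origin — equivalently, invoking the comparability of $\mathcal{M}_\phi$ with the grand maximal function and testing against such a bump — is precisely what trivializes the weight and makes the scale $t=2|x-x_B|$ work. Once that normalization is in place, nothing else in the proof is nontrivial.
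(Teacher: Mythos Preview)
Your argument is correct. The paper does not give its own proof of this lemma but simply quotes it from \cite{Yang}, so there is nothing in the paper to compare against beyond confirming your reasoning. Fixing a bump $\phi$ that equals a positive constant on $B(0,1)$ and then taking the scale $t=2|x-x_B|$ is exactly the clean way to obtain the pointwise bound, and your computation is flawless.

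Your closing remark deserves to be sharpened: for a general Schwartz $\phi$ with $\int\phi\neq0$ the pointwise inequality is not merely ``more delicate'' --- it can fail outright. In dimension one, take $\phi$ smooth, supported in $[1,2]$, with $\int\phi=1$, and let $f=\chi_{[-r,r]}$; for any $x<-r$ and any $t>0$ one has $(x-y)/t<0$ for all $y\in\supp f$, hence $\phi((x-y)/t)=0$ and $\mathcal{M}_\phi f(x)=0$, while $|x|^{-1}\bigl|\int f\bigr|=2r/|x|>0$. Thus the reduction to a well-chosen $\phi$ is not a convenience but a necessity. Since $H^{\varphi}$ and its norm are independent of the admissible $\phi$, this is harmless for the only place the lemma is used (the proof of Theorem~\ref{T1.1}), exactly as you say.
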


\section{Proof of Main Results}\label{S3}
This section is devoted to the proofs of our main theorems. Before proving Theorem \ref{T1.1}, we first establish the following lemma.

\begin{lemma}\label{3.1}
Let $\alpha\in(0,1)$, $\varphi_1,\;\varphi_2$ be as in Theorem \ref{T1.1}.  Then there exists a positive constant $C$, such that for any $b\in\mathrm{BMO(\mathbb{R}^n)}$ and any continuous $(H^{\varphi_1},\infty)$-atom $a$ related to the ball $B:=B(x_B,r_B)\subset\mathbb{R}^n$,
\begin{align*}
\|(b-b_B)I_\alpha a\|_{L^{\varphi_2}(\mathbb{R}^n)}\le C\|b\|_{\mathrm{BMO}(\mathbb{R}^n)}.
\end{align*}
\end{lemma}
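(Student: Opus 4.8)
The plan is to estimate $\|(b-b_B)I_\alpha a\|_{L^{\varphi_2}}$ by splitting $\mathbb{R}^n$ into the dilated ball $2B$ and its complement, exploiting the cancellation of the atom $a$ in the far region and the $L^q$-boundedness of $I_\alpha$ (together with a John--Nirenberg-type bound on $b$) in the near region. Concretely, I would write
\begin{align*}
\int_{\mathbb{R}^n}\varphi_2\Big(x,\frac{|b(x)-b_B|\,|I_\alpha a(x)|}{\lambda}\Big)\,dx
=\int_{2B}(\cdots)\,dx+\sum_{j=1}^\infty\int_{2^{j+1}B\setminus 2^jB}(\cdots)\,dx,
\end{align*}
and show the right-hand side is $\le 1$ for $\lambda\sim\|b\|_{\mathrm{BMO}}$, which gives the claimed $L^{\varphi_2}$ bound.

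For the near region $2B$: since $a$ is an $(H^{\varphi_1},\infty)$-atom, $\|a\|_\infty\le\|\chi_B\|_{L^{\varphi_1}}^{-1}$ and $\supp a\subset B$, so for $q$ slightly above $q(\varphi_2)$ one has $\|I_\alpha a\|_{L^q}\lesssim |B|^{\alpha/n}|B|^{1/q}\|a\|_\infty$ by the classical $L^{q_0}\to L^q$ estimate for $I_\alpha$ (choosing $1/q=1/q_0-\alpha/n$). Pairing with $b-b_B$ via Hölder and a reverse-Hölder/John--Nirenberg estimate for $\mathrm{BMO}$ on $2B$, one controls $\int_{2B}\varphi_2(x,|b-b_B||I_\alpha a|/\lambda)\,dx$ using the uniformly lower type of $\varphi_2$, Lemma~\ref{2.4} to pass from averages over $2B$ to $\varphi_2(2B,\cdot)$, and the hypothesis $|B|^{\alpha/n}\|\chi_B\|_{L^{\varphi_2}}\le C_1\|\chi_B\|_{L^{\varphi_1}}$ to absorb the size factor; the growth-function normalization $\varphi_2(B,\|\chi_B\|_{L^{\varphi_2}}^{-1}\cdot)\lesssim 1$ closes this piece. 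For the far region, the cancellation $\int a=0$ gives the standard pointwise bound, for $x\in 2^{j+1}B\setminus 2^jB$,
\begin{align*}
|I_\alpha a(x)|=\Big|\int_B\Big(\frac{1}{|x-y|^{n-\alpha}}-\frac{1}{|x-x_B|^{n-\alpha}}\Big)a(y)\,dy\Big|
\lesssim \frac{r_B}{(2^j r_B)^{n-\alpha+1}}\int_B|a|\lesssim \frac{2^{-j(n-\alpha+1)}}{(2^j r_B)^{-\alpha}|B|}\|\chi_B\|_{L^{\varphi_1}}^{-1}|B|,
\end{align*}
i.e. $|I_\alpha a(x)|\lesssim 2^{-j(n-\alpha+1)}|2^jB|^{\alpha/n}\|\chi_B\|_{L^{\varphi_1}}^{-1}$ up to harmless dimensional constants. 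Then on each annulus I estimate $\int|b-b_{2^jB}|^{u'}$ by Lemma~\ref{2.6}-type bounds (with $b_{2^{j+1}B}-b_B\lesssim j\|b\|_{\mathrm{BMO}}$), use the uniformly lower type $p$ of $\varphi_2$, Lemma~\ref{2.3} applied to $B\subset 2^{j+1}B$ to compare $\|\chi_B\|_{L^{\varphi_2}}$ with $\|\chi_{2^{j+1}B}\|_{L^{\varphi_2}}$ (losing a factor $2^{jnq/p}$ at worst), and again the dilated size inequality $|2^jB|^{\alpha/n}\|\chi_{2^jB}\|_{L^{\varphi_2}}\lesssim\|\chi_{2^jB}\|_{L^{\varphi_1}}$; the condition $(n-\alpha+1)i(\varphi_2)>nq(\varphi_2)$ is precisely what guarantees that the exponential decay $2^{-jp(n-\alpha+1)}$ (with $p$ close to $i(\varphi_2)$) beats the polynomial growth $2^{jnq}$ (with $q$ close to $q(\varphi_2)$) from the weight comparison, making $\sum_j$ converge.

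The main obstacle I anticipate is the bookkeeping in the far region: one must simultaneously track (a) the decay in $j$ coming from the kernel difference, (b) the polynomial-in-$j$ loss $j^{u'}$ from $b_{2^jB}-b_B$, (c) the loss $2^{jnq/p}$ from comparing $\|\chi_B\|_{L^{\varphi_2}}$ over nested balls via Lemma~\ref{2.3}, and (d) the interaction with the nonlinearity of $\varphi_2$ (uniformly lower type $p$ forces one to raise the $j$-dependent factors to the power $p$). Keeping the dependence on the type index $p$ and the $\mathbb{A}_q$ index $q$ flexible — i.e. choosing $p<i(\varphi_2)$ and $q>q(\varphi_2)$ with $(n-\alpha+1)p>nq$ still valid, which is possible by the strict inequality in the hypothesis — is the crucial point that makes the geometric series summable; a secondary technical nuisance is that $a$ need not have vanishing moments beyond order zero, so only the first-order Taylor expansion of the kernel is available, which is exactly why the restriction $\alpha\in(0,1)$ and the exponent $n-\alpha+1$ appear. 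Once these are handled, combining the two regions and taking $\lambda\approx\|b\|_{\mathrm{BMO}(\mathbb{R}^n)}$ yields the lemma.
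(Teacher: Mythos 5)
Your strategy is the same as the paper's: decompose into $2B$ and the dyadic annuli, use $\|a\|_\infty\le\|\chi_B\|_{L^{\varphi_1}}^{-1}$ near $B$ and the first-order kernel expansion plus $\int a=0$ far away, then combine the uniformly lower type $p$ of $\varphi_2$ with the $\mathbb{A}_q$ comparison $\varphi_2(2^jB,t)\lesssim 2^{jnq}\varphi_2(B,t)$ and sum the geometric series under $(n-\alpha+1)p>nq$; the paper's proof is exactly this. Two small points of comparison. First, on $2B$ the paper does not need the $L^{q_0}\to L^q$ boundedness of $I_\alpha$ at all: the pointwise bound $|I_\alpha a(x)|\le\|a\|_\infty\int_B|x-y|^{\alpha-n}\,dy\le C|B|^{\alpha/n}\|\chi_B\|_{L^{\varphi_1}}^{-1}\le C\|\chi_B\|_{L^{\varphi_2}}^{-1}$ already holds for $x\in 2B$, which lets you treat $2B$ exactly like the annuli (and avoids the delicate issue of running H\"older inside the Musielak--Orlicz modular with an unweighted $L^q$ bound on $I_\alpha a$). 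Second, your far-field computation has an algebra slip: $\frac{r_B}{(2^jr_B)^{n-\alpha+1}}\int_B|a|\lesssim 2^{-j(n-\alpha+1)}|B|^{\alpha/n}\|\chi_B\|_{L^{\varphi_1}}^{-1}$, with $|B|^{\alpha/n}$ rather than $|2^jB|^{\alpha/n}$. This matters for your bookkeeping: with the correct factor you apply the hypothesis $|B|^{\alpha/n}\|\chi_B\|_{L^{\varphi_2}}\le C_1\|\chi_B\|_{L^{\varphi_1}}$ to the ball $B$ itself and incur no extra loss, whereas your plan to invoke the size inequality on the dilated ball $2^jB$ would force you to also compare $\|\chi_{2^jB}\|_{L^{\varphi_1}}$ with $\|\chi_B\|_{L^{\varphi_1}}$, introducing a growth factor governed by the indices of $\varphi_1$ that the hypothesis $(n-\alpha+1)i(\varphi_2)>nq(\varphi_2)$ does not control. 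With that correction (and replacing the spurious $j^{u'}$ by the harmless linear-in-$j$ factor coming from $|b_{2^jB}-b_B|\lesssim j\|b\|_{\mathrm{BMO}}$), your argument closes as in the paper.
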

\begin{proof}
 Due to $\alpha\in(0,1)$, for every $x\in E_1:=2B$, we know that
\begin{align*}
    |I_\alpha a(x)|&\le\int_B\frac{|a(y)|}{|x-y|^{n-\alpha}}dy\\
&\le\|a\|_{L^\infty(\mathbb{R}^n)}\int_B\frac{1}{|x-y|^{n-\alpha}}dy\\
    &\le C|B|^{\frac{\alpha}{n}}\|\chi_B\|_{L^{\varphi_1}(\mathbb{R}^n)}^{-1}\le C\|\chi_B\|_{L^{\varphi_2}(\mathbb{R}^n)}^{-1}.
\end{align*}

For all $x\in E_j:=2^jB\setminus2^{j-1}B,\;j\ge 2$, by using the mean value theorem and the
cancellation conditions of atom $a$, we can write
\begin{align*}
    |I_\alpha a(x)|
    &=\Big|\int_B\Big(\frac{1}{|x-y|^{n-\alpha}}-\frac{1}{|x-x_0|^{n-\alpha}}\Big)a(y)dy\Big|\\
    &\le C\int_B\frac{|y-x_0|}{|x-x_0|^{n-\alpha+1}}|a(y)|dy\\
    &\le C2^{-j(n-\alpha+1)}|B|^\frac{\alpha}{n}\|\chi_B\|_{L^{\varphi_1}(\mathbb{R}^n)}^{-1}\\
    &\le C2^{-j(n-\alpha+1)}\|\chi_B\|_{L^{\varphi_2}(\mathbb{R}^n)}^{-1}.
\end{align*}
Next, we can take $p\in(1,i(\varphi_2))$ together with $q\in(q(\varphi_2),\infty)$ such that $(n-\alpha+1)p>nq$.
Thus, for all $\lambda>0$, by Lemmas \ref{2.4}, \ref{2.6}, we get
\begin{align*}
    \int_{E_j}\varphi_2\Big(x,\frac{|b(x)-b_B|}{\lambda}\Big)dx&\le C\int_{E_j}\varphi_2\Big(x,\frac{|b(x)-b_{2^jB}|+|b_{2^jB}-b_B|}{\lambda}\Big)\\
    &\le C\varphi_2\Big(2^jB,\frac{\|b\|_{\mathrm{BMO}(\mathbb{R}^n)}}{\lambda}\Big) + Cj\varphi_2\Big(2^jB,\frac{\|b\|_{\mathrm{BMO}(\mathbb{R}^n)}}{\lambda}\Big)\\
    &\le C2^{jnq}(j+1)\varphi_2\Big(B,\frac{\|b\|_{\mathrm{BMO}(\mathbb{R}^n)}}{\lambda}\Big).
\end{align*}
Further,  integrating the above estimates for $|I_\alpha a(x)|$ with the uniformly lower type $p$ property of $\varphi_2$ and Lemma \ref{2.4}, we infer that
\begin{align*}
\int_{E_j}&\varphi_2\Big(x,\frac{|(b(x)-b_B)I_\alpha a(x)|}{\lambda}\Big)dx\\
    &\le C2^{-j(n-\alpha+1)p}\int_{E_j}\varphi_2\Big(x,\frac{|b(x)-b_B|\|\chi_B\|_{L^{\varphi_2}(\mathbb{R}^n)}^{-1}}{\lambda}\Big)dx\\
    &\le C(j+1)2^{-j[(n-\alpha+1)p-nq]}\varphi_2\Big(B,\frac{\|b\|_{\mathrm{BMO}(\mathbb{R}^n)}\|\chi_B\|_{L^{\varphi_2}(\mathbb{R}^n)}^{-1}}{\lambda}\Big)dx,
\end{align*}
for all $j\ge1$.
Together with $(n-\alpha+1)p>nq$, we have
\begin{align*}
\|(b-&b_B)I_\alpha a\|_{L^{\varphi_2}(\mathbb{R}^n)}\\
&\le C\inf\{\lambda>0:\sum_{j=1}^{\infty} \int_{E_j}\varphi_2\Big(x,\frac{|(b(x)-b_B)I_\alpha a(x)|}{\lambda}\Big)dx\le1\}\\
&\le C\inf\{\lambda>0:\sum_{j=1}^{\infty}(j+1)2^{-j[(n-\alpha+1)p-nq]}\varphi_2\Big(B,\frac{\|b\|_{\mathrm{BMO}(\mathbb{R}^n)}\|\chi_B\|_{L^{\varphi_2}(\mathbb{R}^n)}^{-1}}{\lambda}\Big)\le1\}\\
&\le C\|b\|_{\mathrm{BMO}(\mathbb{R}^n)}.
\end{align*}
Hence, we complete the proof of Lemma \ref{3.1}.
\end{proof}

Now, we are in the position to prove Theorem \ref{T1.1}.
\begin{proof}[\bf Proof of Theorem \ref{T1.1}]
    We begin with proving that (ii) implies (i). By Lemma \ref{2.2}, it suffices to prove that, for any continuous $(H^{\varphi_1},\infty)$-atom $a$ related to the ball $B:=B(x_B,r_B)\subset\mathbb{R}^n$ with $x_B\in\mathbb{R}^n$ and $r_B\in(0,\infty)$,
    \begin{align*}
        \|[b,I_\alpha](a)\|_{L^{\varphi_2}(\mathbb{R}^n)}\le C\|b\|_{\mathcal{BMO}_{\varphi_1}(\mathbb{R}^n)}.
    \end{align*}
    Note that
    \begin{align*}
        [b,I_\alpha](a)(x)=[b(x)-b_B]I_\alpha a(x)-I_\alpha((b-b_B)a)(x).
    \end{align*}
    Indeed, follows from Lemmas \ref{2.7}, \ref{3.1} and Theorem A, we have
    \begin{align*}
        \|[b,I_\alpha](a)\|_{L^{\varphi_2}(\mathbb{R}^n)}&\le C(\|(b-b_B)I_\alpha a\|_{L^{\varphi_2}(\mathbb{R}^n)}+\|I_\alpha((b-b_B)a)\|_{L^{\varphi_2}(\mathbb{R}^n)})\\
        &\le C(\|b\|_{\mathrm{BMO}(\mathbb{R}^n)}+\|(b-b_B)a\|_{H^{\varphi_1}(\mathbb{R}^n)})
        \le C\|b\|_{\mathcal{BMO}_{\varphi_1}(\mathbb{R}^n)}.
    \end{align*}

    We next prove that (i) implies (ii). For every ball $B=B(x_B,r_B)\subset\mathbb{R}^n$, defining atomic function
    \begin{align*}
        \tilde a:=\frac{1}{2\|\chi_B\|_{L^{\varphi_1}(\mathbb{R}^n)}}(f-f_B)\chi_B,
    \end{align*}
where $f:=\mathrm{sign}(b-b_B)$. We can verify that $\tilde a$ is an $(H^{\varphi_1},\infty)$-atom related to the ball $B$. By Lemma \ref{2.9}, for all $x\notin B$, we have
\begin{align*}
    |x-x_B|^{-n}\frac{1}{2\|\chi_B\|_{L^{\varphi_1}(\mathbb{R}^n)}}\int_B|b(x)-b_B|dx&=|x-x_B|^{-n}\int_B[b(x)-b_B]\tilde a(x)dx\\
    &\le C\mathcal{M}_{\phi}([b-b_B]\tilde a)(x).
\end{align*}
This yields that
\begin{align*}
    \frac{1}{\|\chi_B\|_{L^{\varphi_1}(\mathbb{R}^n)}}\int_B|b(x)-b_B|dx\||x-x_B|^{-n}\|_{L^{\varphi_1}(B^c)}
    &\le C\|\mathcal{M}_\phi([b-b_B]\tilde a)\|_{L^{\varphi_1}(\mathbb{R}^n)}.
\end{align*}
We can further estimate the $\|\mathcal{M}_\phi([b-b_B]\tilde a)\|_{L^{\varphi_1}(\mathbb{R}^n)}$.
Note that
$$[b,I_\alpha](\tilde a)(x)=[b(x)-b_B]I_\alpha \tilde a(x)-I_\alpha((b-b_B)\tilde a)(x).$$
Indeed, follows from Theorem \ref{T1.1}, Lemma \ref{3.1}, we have
\begin{align*}
    \|I_\alpha([b-b_B]\tilde a)\|_{L^{\varphi_2}(\mathbb{R}^n)}&\le C(\|[b,I_\alpha](\tilde a)\|_{L^{\varphi_2}(\mathbb{R}^n)}+\|(b-b_B)I_\alpha \tilde a\|_{L^{\varphi_2}(\mathbb{R}^n)})\\
    &\le C(\|[b,I_\alpha]\|_{{H^{\varphi_1}(\mathbb{R}^n)}\to{L^{\varphi_2}(\mathbb{R}^n)}}+\|b\|_{\mathrm{BMO}(\mathbb{R}^n)}).
\end{align*}
Thus, by Theorem A,
 we obtain
\begin{align*}
    \|b\|_{\mathcal{BMO}_{\varphi_1}}\le C(\|[b,I_\alpha]\|_{{H^{\varphi_1}(\mathbb{R}^n)}\to{L^{\varphi_2}(\mathbb{R}^n)}}+\|b\|_{\mathrm{BMO}(\mathbb{R}^n)}).
\end{align*}
This shows that $b\in\mathcal{BMO}_{\varphi_1}(\mathbb{R}^n)$, and completing the proof of Theorem \ref{T1.1}.
\end{proof}
\medskip

Before proving the Theorem \ref{T1.3}, we begin with giving the following auxiliary lemma.
\begin{lemma}\label{3.2}
Let  $\alpha\in(0,1)$, $\varphi_1,\;\varphi_2$ be growth functions with $\varphi_2\in\mathbb{RH}_\infty$ and $(n-\alpha+1)i(\varphi_2)>nq(\varphi_2)$. There exists a positive constant $C_1$ such that $|B|^{\frac{\alpha}{n}}\|\chi_B\|_{L^{\varphi_2}(\mathbb{R}^n)}\le C_1\|\chi_B\|_{L^{\varphi_1}(\mathbb{R}^n)}$ holds for all balls $B\subset\mathbb{R}^n$. Let $a$ be a continuous $(H^{\varphi_1},\infty)$-atom with $\supp a\subset B(x_B,r_B)$, then $I_\alpha a$ can be decomposed into the composition of $(H^{\varphi_2},q)$-atoms $\{a_j\}_{j=1}^\infty$ related to the balls $\{2^{j}B\}_{j=1}^\infty$,  $q\in(q(\varphi_2),\infty)$. Precisely,$$I_\alpha a(x)=\sum_{j=1}^\infty\lambda_ja_j(x),$$ where exists a positive constant $C$ and $\varepsilon>0$ such that $|\lambda_j|\le C2^{-j\varepsilon}$ for all $j\in\mathbb{Z}^+$.
\end{lemma}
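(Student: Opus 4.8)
The plan is to construct the atomic decomposition of $I_\alpha a$ explicitly, mimicking the annular decomposition already used in the proof of Lemma~\ref{3.1}. Write $E_1:=2B$ and $E_j:=2^jB\setminus 2^{j-1}B$ for $j\ge 2$, so that $\mathbb{R}^n=\bigcup_{j\ge 1}E_j$ up to a null set. The natural first attempt, $b_j:=(I_\alpha a)\chi_{E_j}$, fails condition (iii) of the atom definition (no cancellation), so I would instead use the standard Calder\'on--Zygmund-type correction: set
\begin{align*}
a_j(x):=\frac{1}{\lambda_j}\Big[(I_\alpha a)(x)\chi_{E_j}(x)-\Big(\frac{1}{|E_j|}\int_{E_j}I_\alpha a\Big)\chi_{E_j}(x)\Big],
\end{align*}
or, more conveniently, sum the ``leftover mass'' into the next shell so that each $a_j$ is supported in $2^jB$ and has mean zero. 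Because $\int_{\mathbb{R}^n}a=0$ forces $\int_{\mathbb{R}^n}I_\alpha a$ to behave well at infinity, the tails $\int_{(2^jB)^c}I_\alpha a$ are summable and the telescoping construction closes.

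The second step is the size estimate. From the computations already displayed in Lemma~\ref{3.1}, we have $|I_\alpha a(x)|\le C\|\chi_B\|_{L^{\varphi_1}}^{-1}|B|^{\alpha/n}\le C\|\chi_B\|_{L^{\varphi_2}}^{-1}$ on $E_1$ and $|I_\alpha a(x)|\le C2^{-j(n-\alpha+1)}\|\chi_B\|_{L^{\varphi_2}}^{-1}$ on $E_j$, $j\ge 2$, using the hypothesis $|B|^{\alpha/n}\|\chi_B\|_{L^{\varphi_2}}\le C_1\|\chi_B\|_{L^{\varphi_1}}$. The corrected atom $a_j$ has the same pointwise bound (the average term is dominated by the sup of $|I_\alpha a|$ on the shell and the tails). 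To verify the $L^q_{\varphi_2}(2^jB)$ normalization, I would estimate
\begin{align*}
\|a_j\|_{L^q_{\varphi_2}(2^jB)}\le C2^{-j(n-\alpha+1)}\|\chi_B\|_{L^{\varphi_2}(\mathbb{R}^n)}^{-1},
\end{align*}
since $a_j$ is essentially an $L^\infty$ function with that bound, and then rescale: dividing by $\|\chi_{2^jB}\|_{L^{\varphi_2}}^{-1}$ and invoking Lemma~\ref{2.3} (with $\varphi_2\in\mathbb{A}_q\cap\mathbb{RH}_r$, which holds by Lemma~\ref{2.4}) to compare $\|\chi_{2^jB}\|_{L^{\varphi_2}}$ with $\|\chi_B\|_{L^{\varphi_2}}$ up to a factor $2^{jn(1-1/r)}$ from above. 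This yields $|\lambda_j|\le C2^{-j[(n-\alpha+1)-n(1-1/r)]}$; here $\varphi_2\in\mathbb{RH}_\infty$ (so we may take $r$ arbitrarily large, i.e.\ the factor is essentially $1$) together with the index condition $(n-\alpha+1)i(\varphi_2)>nq(\varphi_2)$ guarantees the exponent $\varepsilon:=(n-\alpha+1)-n(1-1/r)>0$, giving $|\lambda_j|\le C2^{-j\varepsilon}$ as required. The role of $\mathbb{RH}_\infty$ is precisely to kill the growth of $\|\chi_{2^jB}\|_{L^{\varphi_2}}$ that would otherwise obstruct summability.

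The third step is to confirm convergence: $\sum_j\lambda_j a_j\to I_\alpha a$ both pointwise a.e.\ (clear, since on each fixed compact set only finitely many terms are nonzero modulo the averaging corrections, which vanish in the limit) and in $L^{\varphi_2}(\mathbb{R}^n)$ (from $\sum_j|\lambda_j|<\infty$ and the atom bounds, via the quasi-norm and Lemma~\ref{2.1}). The main obstacle I anticipate is bookkeeping the cancellation correction cleanly: one must check that subtracting the shell-averages does not destroy the size bound and that the accumulated corrections telescope to zero at infinity, which is where the vanishing moment of $a$ (hence the decay $|I_\alpha a(x)|\lesssim |x|^{-(n-\alpha+1)}$ established on the far shells) is essential. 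A secondary technical point is ensuring $q$ can be chosen in $(q(\varphi_2),\infty)$ simultaneously with $p\in(1,i(\varphi_2))$ so that $(n-\alpha+1)p>nq$; this is exactly the hypothesis $(n-\alpha+1)i(\varphi_2)>nq(\varphi_2)$, chosen with the same reasoning as in Lemma~\ref{3.1}.
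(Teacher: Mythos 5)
Your overall strategy coincides with the paper's: decompose $I_\alpha a$ over the dyadic annuli, subtract the shell averages to restore the vanishing moment, and push the leftover averages outward by an Abel-summation/telescoping argument resting on $\int_{\mathbb{R}^n}I_\alpha a\,dx=0$ (available because $\alpha<1$ makes $I_\alpha a$ integrable, with decay $|x|^{-(n-\alpha+1)}$ on the far shells). Your use of the pointwise bounds already established in Lemma \ref{3.1}, in place of the paper's $L^{p_0}\to L^{q_0}$ argument for the innermost ball, is a legitimate simplification, since $\|f\|_{L_{\varphi_2}^q(B)}\le\|f\|_{L^\infty}$ directly from the definition of that norm.

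The one step that does not survive scrutiny is the size estimate for $\lambda_j$. To pass from $\|a_j\|_{L_{\varphi_2}^q(2^jB)}\le C2^{-j(n-\alpha+1)}\|\chi_B\|_{L^{\varphi_2}(\mathbb{R}^n)}^{-1}$ to the required normalization by $\|\chi_{2^jB}\|_{L^{\varphi_2}(\mathbb{R}^n)}^{-1}$ you must bound the ratio $\|\chi_{2^jB}\|_{L^{\varphi_2}}/\|\chi_B\|_{L^{\varphi_2}}$ from \emph{above}, and the relevant side of Lemma \ref{2.3} is the left-hand ($\mathbb{A}_q$/lower-type) inequality, which gives $\|\chi_{2^jB}\|_{L^{\varphi_2}}\le C2^{jnq/p}\|\chi_B\|_{L^{\varphi_2}}$; the right-hand ($\mathbb{RH}_r$) inequality you invoke only bounds this ratio from \emph{below}. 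The correct exponent is therefore $\varepsilon=(n-\alpha+1)-nq/p$, exactly as in the paper, and its positivity is precisely where the hypothesis $(n-\alpha+1)i(\varphi_2)>nq(\varphi_2)$ enters through the choice of $p\in(0,i(\varphi_2))$ and $q\in(q(\varphi_2),\infty)$. Your exponent $(n-\alpha+1)-n(1-1/r)$ would be positive automatically (it is at least $1-\alpha>0$), which is itself a warning sign that the inequality was applied in the wrong direction: it would make the index hypothesis superfluous. Since you do note at the end that $p$ and $q$ must be chosen with $(n-\alpha+1)p>nq$, the repair is routine, but as written the displayed bound on $|\lambda_j|$ is not justified. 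Relatedly, $\mathbb{RH}_\infty$ is not what tames the growth of $\|\chi_{2^jB}\|_{L^{\varphi_2}}$; in the paper it is used in the weighted H\"older steps for the $j=0$ piece and for the correction terms $\beta_j$.
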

\begin{proof}
    For convenience, we denote $M(x)=I_\alpha a(x)$. Let $E_0=B_1=B(x_B,2r_B),\;E_j:=B_{j+1}\setminus B_j\;(j=1,2,\dots)$, let $M_j$ be the mean value of $M(x)$ on $E_j$.
    Define
$\alpha_j(x)=(M(x)-M_j)\chi_{E_j}(x).$
It is clear that $\supp\alpha_j\subset B_{j+1}$, and $\int_{B_{j+1}}\alpha_j(x)dx=0, ~j=0,1,2,\ldots$. Choose $q\in(q(\varphi_2),\infty)$, then by H\"older's inequality and $\varphi _2\in\mathbb{A}_q$,
\begin{align*}
    &\Big(\frac{1}{\varphi_2(B_{j+1},t)}\int_{B_{j+1}}|(M_j)\chi_{E_j}(x)|^{q}\varphi_2(x,t)dx\Big)^{1/q}\\
&\leq C\big(\varphi_2(E_j,t)\big)^{1/q}\big(\varphi_2(B_{j+1},t)\big)^{-1/q}
                |E_j|^{-1}\\
&\qquad\qquad\times\Big(\int_{B_{j+1}}|M(x)\chi_{E_j}(x)|^{q}
                \varphi_2(x,t)dx\Big)^{1/q}\Big(\int_{B_{j+1}}
                \varphi_2(x,t)^{-1/(q-1)}dx\Big)^{1-1/q}\\
&\leq C\big(\varphi_2(B_{j+1},t)\big)^{1/q}\big(\varphi_2(B_{j+1},t)\big)^{-1/q}
                |E_j|^{-1}\\
&\qquad\qquad\times\Big(\int_{B_{j+1}}|M(x)\chi_{E_j}(x)|^{q}
                \varphi_2(x,t)dx\Big)^{1/q}\Big(\int_{B_{j+1}}
                \varphi_2(x,t)dx\Big)^{-1/q}|B_{j+1}|\\
&\leq C\| M\chi_{E_j}\|_{L_{\varphi_2}^{q}(B_{j+1})}
                \frac{|B_{j+1}|}{|E_j|}
\leq C\| M\chi_{E_j}\|_{L_{\varphi_2}^{q}(B_{j+1})}.
\end{align*}
 Thus, we have
 \begin{align*}
\|\alpha_j\|_{L_{\varphi_2}^{q}(B_{j+1})}
&\leq
           \| M\chi _{E_j}\|_{L_{\varphi_2}^{q}(B_{j+1})}
        +\| (M_j)\chi_{E_j}\|_{L_{\varphi_2}^{q}(B_{j+1})}\\
&\leq C\| M\chi _{E_j}\|_{L_{\varphi_2}^{q}(B_{j+1})}.
\end{align*}

For $j=0$, we choose $1<p_0<n/\alpha,\;q_0>q$ such that $\alpha/n+1/q_0=1/p_0$. From the way $p_0$ and $q_0$ are chosen, we can derive that $I_\alpha$ is bounded from $L^{p_0}(\mathbb{R}^n)$ into $L^{q_0}(\mathbb{R}^n)$. Due to $\varphi\in\mathbb{RH}_\infty\subset\mathbb{RH}_{q_0\setminus(q_0-q)}$, by H\"older's inequality and Lemma \ref{2.3}, we yield that
\begin{align*}
\|M\chi_{E_0}\|_{L_{\varphi_2}^q(B_1)}
&\leq\varphi_2(B_1,t)^{-1/q}\Big(\int_{B_1}
                    |I_\alpha a(x)|^{q_0}dx\Big)^{1/q_0}\Big(\int_{B_1}
                    \varphi_2(x,t)^{q_0/(q_0-q)}dx\Big)^{(q_0-q)/q_0q}\\
&\leq|B_1|^{-1/q_0}\|I_\alpha a\|_{L^{q_0}(B_1)}\\
&\qquad\times\Big[\Big(\frac{1}{|B_1|}\int_{B_1}\varphi_2(x,t)dx\Big)^{-1}\Big(\frac{1}{|B_1|}\int_{B_1}\varphi_2(x,t)^{q_0/(q_0-q)}dx\Big)^{(q_0-q)/q_0}\Big]^{1/q}\\
&\le C|B_1|^{-1/q_0}\|a\|_{L^{p_0}(\mathbb{R}^n)}\\
&\le C|B_1|^{-1/q_0}|B|^{1/p_0}\|\chi_B\|_{L^{\varphi_1}(\mathbb{R}^n)}^{-1}\le C\|\chi_{B_1}\|_{L^{\varphi_2}(\mathbb{R}^n)}^{-1}.
\end{align*}
Thus, we have $ \|\alpha_0\|_{L_{\varphi_2}^{q}(B_1)}\leq C\|\chi_{B_1}\|_{L^{\varphi_2}(\mathbb{R}^n)}^{-1}$.

For $j=1,2,\ldots$, we have
$$\| \alpha_j\|_{L_{\varphi_2}^{q}(B_{j+1})}
\leq C2^{(j+1)(nq/p-n+\alpha-1)}\|\chi_{B_{j+1}}\|_{L^{\varphi_2}(\mathbb{   R}^n)}^{-1}.
$$
 The proof of the above inequality is standard. Precisely, we select $p\in (0, i(\varphi_2))$ and $q\in (q(\varphi_2,\infty))$ such that $nq/p-n+\alpha-1<0$, because $(n-\alpha+1)i(\varphi_2)>nq(\varphi_2)$. Then, by the mean value Theorem and the cancellation conditions of $a$ and Lemma \ref{2.3},
 \begin{align*}
     &\Big(\frac{1}{\varphi_2(B_{j+1},t)}\int_{E_j}
       |I_\alpha a(x)|^{q}\varphi_2(x,t)dx\Big)^{1/q}\\
&=\Big(\frac{1}{\varphi_2(B_{j+1},t)}\int_{E_j}
       \Big|\int_Ba(y)\Big[\frac{1}{|x-y|^{n-\alpha}}-\frac{1}{|x-x_0|^{n-\alpha}}\Big]dy\Big|^{q}\varphi_2(x,t)dx\Big)^{1/q}\\
 &\leq C\Big(\frac{1}{\varphi_2(B_{j+1},t)}\int_{E_j}
       \Big|\int_Ba(y)\frac{|y-x_0|}{|x-x_0|^{n-\alpha+1}}dy\Big|^{q}\varphi_2(x,t)dx\Big)^{1/q}\\
&\leq C\Big(\frac{1}{\varphi_2(B_{j+1},t)}\int_{E_j}
         \frac{r_B^{q}}{(2^jr_B)^{q(n-\alpha+1)}}
       \Big|\int_Ba(y)dy\Big|^{q}\varphi_2(x,t)dx\Big)^{1/q}\\
&\leq C\Big(\frac{1}{\varphi_2(B_{j+1},t)}\int_{E_j}
         \frac{r_B^{q(n+1)}}{(2^jr_B)^{q(n-\alpha+1)}}
       \varphi_2(x,t)dx\Big)^{1/q}\|\chi_B\|_{L^{\varphi_2}(\mathbb{R}^n)}^{-1}|B|^{-\alpha/n}\\
&\leq C\frac{r_B^{(n+1)}}{(2^jr_B)^{(n-\alpha+1)}}
        \|\chi_{B_{j+1}}\|_{L^{\varphi_2}(\mathbb{R}^n)}^{-1}
        \frac{|B_{j+1}|^{q/ p}}{|B|^{q/ p}}|B|^{-\alpha/n}\\
 &\leq C2^{(j+1)(nq/p-n+\alpha-1)}
          \|\chi_{B_{j+1}}\|_{L^{\varphi_2}(\mathbb{R}^n)}^{-1}         .
 \end{align*}

 Moreover,
$$M(x)-\sum_{j=0}^{\infty} \alpha_j(x)=\sum_{j=0}^{\infty}(M_j) \chi_{E_j}(x).$$
Denote $b_j:=\int_{E_j}M(x)dx$, we write $\sum_{j=0}^\infty(M_j)\chi_{E_j}=\sum_{j=0}^\infty b_j\frac{(M_j)\chi_{E_j}}{b_j}$. By the fact $\sum_{k=0}^\infty b_k=0$, we get
\begin{align*}
\sum_{j=0}^\infty b_j\frac{(M_j)\chi_{E_j}(x)}{b_j}
=&\sum_{j=0}^\infty\Big(\sum_{k=j}^\infty b_k-\sum_{k=j+1}^\infty b_k\Big)\frac{(M_j)\chi_{E_j}(x)}{b_j}
=:\sum_{j=0}^\infty \beta_j (x).
\end{align*}
It is easy to see that $\supp \beta_j\subset B_{j+2},$ and $\int_{B_{j+2}}\beta_j(x)dx=0, j=0,\,1,\,2,\ldots$.
\begin{align*}
    \|\beta_j\|_{L_{\varphi_2}^{q}(B_{j+2})}
\leq \Big\|\sum_{k=j+1}^\infty b_k\frac{(M_{j+1})\chi_{E_{j+1}}}{b_{j+1}}\Big\|_{L_{\varphi_2}^{q}(B_{j+2})}
    +\Big\|\sum_{k=j+1}^\infty b_k\frac{(M_j)\chi_{E_j}}{b_j}\Big\|_{L_{\varphi_2}^{q}(B_{j+1})}.
\end{align*}
For the first part,  by H\"older's inequality, Lemma \ref{2.3} and $\varphi_2 \in \mathbb {RH}_\infty\cap\mathbb{A}_q$, we have
\begin{align*}
&\frac{1}{\varphi_2(B_{j+2},t)}\int_{B_{j+2}}\Big|\sum_{k=j+1}^\infty
           b_k\frac{M_{j+1}\chi_{E_{j+1}}(x)}{b_{j+1}}\Big|^{q}\varphi_2(x,t)dx\\
\leq &\sum_{k=j+1}^\infty\frac{1}{\varphi_2(B_{j+2},t)}\int_{B_{j+2}}
        \Big|\frac{b_k\chi_{E_{j+1}}(x)}{|E_{j+1}|}\Big|^{q}\varphi_2(x,t)dx\\
\leq &C\sum_{k=j+1}^\infty\frac{\varphi_2(E_{j+1},t)}{\varphi_2(B_{j+2},t)}
        \frac{1}{|E_{j+1}|^{q}}\|M\chi_{E_k}\|_{L_{\varphi_2}^{q}(B_{k+1})}^{q}\\
 &\qquad\qquad\times       \varphi_2(B_{k+1},t)\Big(\int_{B_{k+1}} \varphi_2(x,t)^{-1/(q-1)}dx\Big)^{q-1}\\
\leq &C\sum_{k=j+1}^\infty
        \frac{1}{|E_{j+1}|^{q}}|B_{k+1}|^{q}\|M\chi_{E_k}\|_{L_{\varphi_2}^{q}(B_{k+1})}^{q}
        \varphi_2(B_{k+1},t)\big(\varphi_2(B_{k+1},t)\big)^{-1}\\
\leq &C\sum_{k=j+1}^\infty
        \frac{|B_{k+1}|^{q}}{|E_{j+1}|^{q}}2^{(k+1)q(nq/ p-n+\alpha-1)}
        \|\chi_{B_{k+1}}\|_{L^{\varphi_2}(\mathbb{R}^n)}^{-q}\\
\leq &C\sum_{k=j+1}^\infty
        \frac{|B_{k+1}|^{q}}{|E_{j+1}|^{q}}2^{(k+1)q(nq/ p-n+\alpha-1)}
        \|\chi_{B_{j+2}}\|_{L^{\varphi_2}(\mathbb{R}^n)}^{-q}\frac{|B_{j+2}|^{q}}{|B_{k+1}|^{q}}\\
\leq &C2^{(j+2)q(nq/ p-n+\alpha-1)}
        \|\chi_{B_{j+2}}\|_{L^{\varphi_2}(\mathbb{R}^n)}^{-q}.
\end{align*}
Thus,
\begin{align*}
    \Big\|\sum_{k=j+1}^\infty b_k\frac{(M_{j+1})\chi_{E_{j+1}}}{b_{j+1}}\Big\|_{L_{\varphi_2}^{q}(B_{j+2})}\leq C\|\chi_{B_{j+2}}\|^{-1}_{{L^{\varphi_2}}(\mathbb{R}^n)}2^{(j+2)(nq/ p-n+\alpha-1)}.
\end{align*}
 Similarly,
\begin{align*}
    \Big\|\sum_{k=j+1}^\infty b_k\frac{(M_j)\chi_{E_j}}{b_j}\Big\|_{L_{\varphi_2}^{q}(B_{j+1})}\leq C\|\chi_{B_{j+1}}\|^{-1}_{L^{\varphi_2}(\mathbb{R}^n)}2^{(j+1)(nq/ p-n+\alpha-1)}.
\end{align*}
Combining the estimates above, we have
\begin{align*}
\|\beta_j\|_{L_{\varphi_2}^{q}(B_{j+2})}\leq C\|\chi_{B_{j+2}}\|^{-1}_{L^{\varphi_2}(\mathbb{R}^n)}2^{(j+2)(nq/ p-n+\alpha-1)}.
\end{align*}
Thus,
\begin{align*}
M(x)&=\sum_{j=0}^\infty 2^{-j\varepsilon}\big(2^{j\varepsilon}\alpha_j(x)\big)+\sum_{j=0}^\infty 2^{-j\varepsilon}\big(2^{j\varepsilon}\beta_j (x)\big)\\
&=\sum_{j=0}^\infty 2^{-j\varepsilon}\Big(2^{j\varepsilon}\alpha_{j-1}(x)+2^{j\varepsilon}\beta_{j-2} (x)\Big)=:\sum_{j=1}^\infty \lambda_j a_j(x),
\end{align*}
 where $\alpha_{-1}=\beta_{-1}=\beta_{-2}=0$, $\varepsilon=n-\alpha-nq/p+1>0$. It is easy to see that $\supp{a_j}\subset B_j$,
 $\|a_j\|_{L_{\varphi_2}^{q}(B_{j})}
 \leq C\|\chi_{B_{j}}\|_{L^{\varphi_2}(\mathbb{R}^n)}^{-1},$
 $\int_{\mathbb{R}^n}a_j(x)dx=0$ and $\sum_{j=1}^\infty |\lambda_j|^q<\infty$. Therefore, these functions $\{a_j\}$ defined above are all $(H^{\varphi_2},q)$-atoms related to the balls $\{B_j\}_{j\in \mathbb{Z_+}}$.
\end{proof}

We now have a proof of Theorem \ref{T1.3}.
\begin{proof}[\bf Proof of Theorem \ref{T1.3}]
As the proof of Theorem \ref{T1.1}, by Lemma \ref{2.2}, it suffices to prove that for any continuous $(H^{\varphi_1},\infty)$-atom $a$ related to the ball $B=B(x_B,r_B)\subset\mathbb{R}^n$,
\begin{align*}
    \|[b,I_\alpha](a)\|_{H^{\varphi_2}(\mathbb{R}^n)}\le C\big(\|b\|_{\mathcal{BMO}_{\varphi_1}(\mathbb{R}^n)}+\|b\|_{\mathcal{BMO}_{\varphi_2,u}(\mathbb{R}^n)}\big).
\end{align*}
By Theorem B and Lemma \ref{2.7}, we obtain
\begin{align*}
    \|I_\alpha((b-b_B)a)\|_{H^{\varphi_2}(\mathbb{R}^n)}\le C\|(b-b_B)a\|_{H^{\varphi_1}(\mathbb{R}^n)}\le C\|b\|_{\mathcal{BMO}_{\varphi_1}(\mathbb{R}^n)}.
\end{align*}
By Lemma \ref{3.2}, we know that $I_\alpha a$ can be decomposed into $(H^{\varphi_2},q)$-atoms for $q\in (q(\varphi_2),\infty)$ as follows:
\begin{align*}
    I_\alpha a(x)=\sum_{j=1}^{\infty} \lambda_j a_j(x),
\end{align*}
where $\{a_j\}_{j=1}^\infty$ are $(H^{\varphi_2},s)$-atoms $(uq(\varphi_2)<s<\infty)$ related to the balls $\{2^{j}B\}_{j=1}^\infty$, and there exists a positive constant $C$ and $\varepsilon>0$ such that $|\lambda_j|\le C2^{-j\varepsilon}$ for all $j\in{\mathbb{Z}}$.
Thus, we can simplify the proof by showing that for every $(H^{\varphi_2},s)$-atom $a_j$,
\begin{align*}
    \|(b-b_{2^{j+1}B})a_j\|_{H^{\varphi_2}(\mathbb{R}^n)}\le C\|b\|_{\mathcal{BMO}_{\varphi_2,u}(\mathbb{R}^n)}, \;\;\;\forall j\in \mathbb Z_+.
\end{align*}
We write
\begin{align*}
\|\mathcal{M} _\phi((b-b_{2^{j+1}B})a_j)\|_{L^{\varphi_2}(\mathbb{R}^n)}
& \le \|\mathcal{M} _\phi((b-b_{2^{j+1}B})a_j)\|_{L^{\varphi_2}(2^{j+1}B)}\\
    &\qquad+\|\mathcal{M} _\phi((b-b_{2^{j+1}B})a_j)\|_{L^{\varphi_2}((2^{j+1}B)^c)}\\
&=:I_1+I_2.
\end{align*}
For $I_1$, choose $q_1\in(q(\varphi_2),s)$ satisfies $1/q_1=1/s+1/l$, by the H\"older's inequality, Lemma \ref{2.3}, for all $t\in[0,\infty)$, we have,
\begin{align*}
\Big[\frac{1}{\varphi_2(2^{j+1}B,t)}&\int_{2^{j+2}B}|\mathcal{M}_\phi
         ((b-b_{2^{j+1}B})a_j)(x)|^{q_1}\varphi_2(x,t)dx\Big]^{1/q_1}\\
&\le C\Big[\frac{1}{\varphi_2(2^{j+1}B,t)}\int_{\mathbb{R}^n}
         |(b(x)-b_{2^{j+1}B})a_j|^{q_1}\varphi_2(x,t)dx\Big]^{1/q_1}\\
&\le C\Big[\frac{1}{\varphi_2(2^{j+1}B,t)}\int_{2^{j+1}B}
         |b(x)-b_{2^{j+1}B}|^{l}\varphi_2(x,t)dx\Big]^{1/l}\\
  &\qquad\qquad\times
         \Big[\frac{1}{\varphi_2(2^{j}B,t)}\int_{2^{j}B}|a_j(x)|^{s}
          \varphi_2(x,t)dx\Big]^{1/s}\\
&\le C\|b\|_{\rm BMO(\mathbb{R}^n)}
         \|a_j\|_{L_{\varphi_2}^s(2^{j}B)}\\
&\le C\|b\|_{\rm BMO(\mathbb{R}^n)}\|\chi_{2^{j}B}\|_{L^{\varphi_2}(\mathbb{R}^n)}^{-1}.
\end{align*}
Hence, by Lemma \ref{2.1}, we have
\begin{align*}
\|\mathcal{M} _\phi(&(b-b_{2^{j+1}B})a_j)\|_{L^{\varphi_2}(2^{j+1}B)}\\
&:=\inf\{\lambda>0:\int_{2^{j+1}B}\varphi_2(x,\frac{|\mathcal{M}_\phi((b-b_{2^{j+1}B})a_j)(x)|}{\lambda})dx\le1\}\\
&\le C\inf\{\lambda>0:\varphi_2(2^{j+1}B,\frac{\|\mathcal{M}_\phi((b-b_{2^{j+1}B})a_j)\|_{L_{\varphi_2}^{q_1}(2^{j+1}B)}}{\lambda})\le1\}\\
&\le C\|b\|_{\mathcal{BMO}_{\varphi_2,u}(\mathbb{R}^n)}
\end{align*}
Now we estimate $I_2$. For every $x\in(2^{j+1}B)^c$ and $y\in2^{j}B$, we see that $|x-y|\approx|x-x_0|$. And by H\"older's inequality for $s/u\;(1<u<s<\infty)$ , $\varphi_2\in\mathbb{A}_{s/u}$, for the maximal function estimate, we have
\begin{align*}
&\mathcal{M} _\phi((b-b_{2^{j+1}B})a_j)(x)\\
&:=\sup_{t>0}\frac{1}{t^n}\int_{2^{j}B}
|b(y)-b_{2^{j+1}B}||a_j(y)|\big|\phi(\frac{x-y}{t})\big|dy\\
&\le C\frac{1}{|x-x_0|^n}\int_{2^{j}B}
     |b(y)-b_{2^{j+1}B}||a_j(y)|dy\\
&\le C\frac{1}{|x-x_0|^n}(
       \int_{2^{j}B}|b(y)-b_{2^{j+1}B}|^{u'}dy)^{1/u'}
            (\int_{2^{j}B}|a_j(y)|^{u}dy)^{1/u}\\
&\le C\frac{1}{|x-x_0|^n}(
       \int_{2^{j}B}|b(y)-b_{2^{j+1}B}|^{u'}dy)^{1/u'}\\
&\qquad\qquad\times (\int_{2^{j}B}|a_j(y)|^{s}\varphi_2(y,t)dy)^{1/s}
             (\int_{2^{j}B}\varphi_2(y,t)^{-u/(s-u)}dy)^{1/u-1/s}\\
&\le C\frac{1}{|x-x_0|^n}(
       \int_{2^{j+1}B}|b(y)-b_{2^{j+1}B}|^{u'}dy)^{1/u'}
            \|a_j\|_{L_{\varphi_2}^s(\mathbb{R}^n)}|2^{j+1}B|^{1/u}\\
&\le C\frac{|2^{j+1}B|^{1/u}}{|x-x_0|^n\|\chi_{2^{j+1}B}\|_{L^{\varphi_2}(\mathbb{R}^n)}}(
       \int_{2^{j+1}B}|b(y)-b_{2^{j+1}B}|^{u'}dy)^{1/u'}.
\end{align*}
This leads to the boundedness for $I_2$,
\begin{align*}
I_2\leq &C\frac{|2^{j+1}B|^{1/u}}{\|\chi_{2^{j+1}B}\|_{L^{\varphi_2}(\mathbb{R}^n)}}
            \||\cdot-x_0|^{-n}\|_{L^{\varphi_2}((2^{j+1}B)^c)}\\
&\qquad\times(\int_{2^{j+1}B}|b(y)-b_{2^{j+1}B}|^{u'}
        dy)^{1/u'}\leq C\|b\|_{\mathcal {BMO}_{\varphi_2,u}(\mathbb{R}^n)}.
       \end{align*}
Combining with  the estimate for $I_2$, we obtain
\begin{align*}
\|[b(x)-b_B]I_\alpha a\|_{H^{\varphi_2}(\mathbb{R}^n)}
&=\Big\|\sum_{j=1}^\infty[(b-b_{2^{j+1}B})+(b_{2^{j+1}B}-b_B)]
               \lambda_ja_j\Big\|_{H^{\varphi_2}(\mathbb{R}^n)}\\
&\le C\sum_{j=1}^\infty|\lambda_j|(\|(b-b_{2^{j+1}B})a_j\|_{H^{\varphi_2}(\mathbb{R}^n)}
             +\|(b_B-b_{2^{j+1}B})a_j\|_{H^{\varphi_2}(\mathbb{R}^n)})\\
&\le C\Big(\|b\|_{\mathcal{BMO}_{\varphi_2,u}(\mathbb{R}^n)}
           \sum_{j=1}^\infty2^{-j\eps}
    +\|b\|_{{\rm BMO}(\mathbb{R}^n)}
    \sum_{j=1}^\infty(j+1)2^{-j\eps}\Big)\\
&\le C\|b\|_{\mathcal{BMO}_{\varphi_2,u}(\mathbb{R}^n)}.
\end{align*}
The commutator estimate follows that
\begin{align*}
    \|[b,I_\alpha](a)\|_{H^{\varphi_2}(\mathbb{R}^n)}\le C\big(\|b\|_{\mathcal{BMO}_{\varphi_1}(\mathbb{R}^n)}+\|b\|_{\mathcal{BMO}_{\varphi_2,u}(\mathbb{R}^n)}\big).
\end{align*}
This completes the proof of Theorem \ref{T1.3}.
\end{proof}

\section{Further Results}\label{S4}
In this section, we present the corresponding results for the commutators of fractional integrals with homogeneous kernels.
Suppose that $\Omega\in L^1(\mathbb{S}^{n-1})$ is a homogeneous of degree zero on $\mathbb{R}^n$, and let $\mathbb{S}^{n-1}$ denote the unit sphere in $\mathbb{R}^n\;(n\ge2)$ equipped with normalized Lebesgue measure $d\sigma=d\sigma(x')$ and $x':=x/|x|$ for any $x\ne0$.
For $0<\alpha<n$ and $b\in L_{\mathrm{loc}}(\mathbb{R}^n)$, we consider the general fractional integrals with homogeneous kernels

 $$T_{\Omega,\alpha}f(x)=\int_{\mathbb{R}^n}\frac{\Omega(x-y)}{|x-y|^{n-\alpha}}f(y)dy$$
 and the corresponding commutator
 $$[b,T_{\Omega,\alpha}]f(x)=\int_{\mathbb{R}^n}[b(x)-b(y)]\frac{\Omega(x-y)}{|x-y|^{n-\alpha}}f(y)dy.$$
 It can be easily concluded that $T_{\Omega,\alpha}=I_\alpha$ and $[b,T_{\Omega,\alpha}]=[b,I_\alpha]$ when $\Omega=1$.

 The boundedness of $[b,T_{\Omega, \alpha}]$ on Lebesgue and weighted Lebesgue spaces have been studied extensively. For rough kernel operators,  Ding and Lu \cite{DL2} established the $(L^p_{\omega^p},L^q_{\lambda^q})$-boundedness under the conditions $1<p<n/\alpha$, $1/q=1/p-\alpha/n$, $\omega, \lambda\in A_{p,q}$ and $b\in BMO_{\omega/\lambda}(\mathbb{R}^n)$. This result was extended to fractional integrals with log-Dini type kernels by Segovia and Torrea \cite{ST}. Notably, when $\Omega\in C(S^{n-1})$, Guo et al. \cite{GLW} established a complete characterization for continuous kernels, showing that the $[b, T_{\Omega,\alpha}]$ is $(L_{\omega^p}^p,L_{\lambda^q}^q)$-bounded if and only if $b\in BMO_{\omega/\lambda}(\mathbb{R}^n)$. In \cite{DLL2003}, Ding et al. investigated the boundedness properties of the operator $T_{\Omega,\alpha}$ within the context of weighted Hardy spaces. Further, we \cite{HanWu} considered the boundedness of $[b,T_{\Omega,\alpha}]$ on the weighted Hardy spaces, $b\in \mathcal{BMO}_{\omega,p}$.

 Inspired by Theorems A and B, it is natural to consider the behaviors of $ T_{\Omega, \alpha}$ in the Musielak--Orlicz  Hardy spaces. The similar arguments with slight
modifications to those in proving Theorems A and  B in \cite{FRACTIONAL}, we can obtain the following theorems.

\begin{theorem}\label{T4.1}
    Let $\beta\in(0,1]$, $\alpha\in(0,\beta)$, $\Omega\in Lip_\beta(\mathbb{S}^{n-1})$, $\varphi_1, \varphi_2$ be two growth functions,  and $(n-\alpha+\beta)i(\varphi_2)>nq(\varphi_2)$, there exists a positive constant $C_1$ such that, for all balls $B\subset\mathbb{R}^n$, $|B|^{\frac{\alpha}{n}}\|\chi_B\|_{L^{\varphi_2}(\mathbb{R}^n)}\le C_1\|\chi_B\|_{L^{\varphi_1}(\mathbb{R}^n)}$. Then $T_{\Omega,\alpha}$ is bounded from $H^{\varphi_1}(\mathbb{R}^n)$ to $L^{\varphi_2}(\mathbb{R}^n)$.
\end{theorem}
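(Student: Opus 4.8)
The plan is to run the sufficiency half of the argument behind Theorem A (equivalently, the first estimate in Lemma~\ref{3.1}) for the single operator $T_{\Omega,\alpha}$ in place of the principal part $(b-b_B)I_\alpha$; the only genuinely new ingredient is a H\"ormander-type smoothness estimate for the kernel $\Omega(x-y)/|x-y|^{n-\alpha}$ coming from $\Omega\in\mathrm{Lip}_\beta(\mathbb S^{n-1})$. By Lemma~\ref{2.2}(ii) (applied with $\mathcal X=L^{\varphi_2}(\mathbb R^n)$) it suffices to bound $\|T_{\Omega,\alpha}a\|_{L^{\varphi_2}(\mathbb R^n)}$ uniformly over all continuous $(H^{\varphi_1},\infty)$-atoms $a$ supported in a ball $B=B(x_B,r_B)$, and then to pass from atoms to $H^{\varphi_1}$.

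First I would record two pointwise estimates on the annuli $E_1:=2B$ and $E_j:=2^jB\setminus 2^{j-1}B$ for $j\ge 2$, whose union (up to measure zero) is $\mathbb R^n$. On $E_1$, using $\Omega\in L^\infty(\mathbb S^{n-1})$ (immediate from $\Omega\in\mathrm{Lip}_\beta$), the atom bound $\|a\|_{L^\infty}\le\|\chi_B\|_{L^{\varphi_1}(\mathbb R^n)}^{-1}$, and $\int_B|x-y|^{\alpha-n}\,dy\lesssim r_B^{\alpha}$, one gets $|T_{\Omega,\alpha}a(x)|\le C|B|^{\alpha/n}\|\chi_B\|_{L^{\varphi_1}(\mathbb R^n)}^{-1}\le C\|\chi_B\|_{L^{\varphi_2}(\mathbb R^n)}^{-1}$, the last step by the standing inequality $|B|^{\alpha/n}\|\chi_B\|_{L^{\varphi_2}(\mathbb R^n)}\le C_1\|\chi_B\|_{L^{\varphi_1}(\mathbb R^n)}$. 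On $E_j$ with $j\ge 2$, the vanishing moment of $a$ allows replacing the kernel by its difference with the value at $y=x_B$, after which the key smoothness estimate
$$
\left|\frac{\Omega(x-y)}{|x-y|^{n-\alpha}}-\frac{\Omega(x-x_B)}{|x-x_B|^{n-\alpha}}\right|\le C\,\frac{|y-x_B|^{\beta}}{|x-x_B|^{n-\alpha+\beta}},\qquad |x-x_B|\ge 2|y-x_B|,
$$
together with $|y-x_B|\le r_B$ yields $|T_{\Omega,\alpha}a(x)|\le C\,2^{-j(n-\alpha+\beta)}\|\chi_B\|_{L^{\varphi_2}(\mathbb R^n)}^{-1}$; absorbing the factor $2^{-(n-\alpha+\beta)}$ into the constant, this bound in fact holds on every $E_j$, $j\ge 1$.

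The remaining Musielak--Orlicz bookkeeping is essentially the display chain at the end of Lemma~\ref{3.1}, but cleaner since there is no $(b-b_B)$ factor and Lemma~\ref{2.6} is not needed. Using $(n-\alpha+\beta)i(\varphi_2)>nq(\varphi_2)$, choose $p\in(0,i(\varphi_2))$ and $q\in(q(\varphi_2),\infty)$ with $(n-\alpha+\beta)p>nq$; then $\varphi_2$ is of uniformly lower type $p$ and $\varphi_2\in\mathbb A_q$. Extracting the factor $2^{-j(n-\alpha+\beta)p}$ via the lower-type $p$ property (and uniform upper type $1$ to absorb multiplicative constants) and comparing $\varphi_2(2^jB,\cdot)$ with $\varphi_2(B,\cdot)$ via Lemma~\ref{2.4} (a factor $2^{jnq}$), one obtains for every $\lambda>0$
$$
\int_{E_j}\varphi_2\Big(x,\frac{|T_{\Omega,\alpha}a(x)|}{\lambda}\Big)\,dx\le C\,2^{-j[(n-\alpha+\beta)p-nq]}\,\varphi_2\Big(B,\frac{C\|\chi_B\|_{L^{\varphi_2}(\mathbb R^n)}^{-1}}{\lambda}\Big).
$$
Summing the convergent geometric series in $j$ and recalling $\inf\{\lambda>0:\varphi_2(B,A/\lambda)\le 1\}=A\|\chi_B\|_{L^{\varphi_2}(\mathbb R^n)}$, taking $A\sim\|\chi_B\|_{L^{\varphi_2}(\mathbb R^n)}^{-1}$ gives $\|T_{\Omega,\alpha}a\|_{L^{\varphi_2}(\mathbb R^n)}\le C$ with $C$ independent of $a$ and $B$, and Lemma~\ref{2.2} closes the argument.

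The main obstacle -- the only point where the reasoning departs from the $I_\alpha$ case -- is the kernel smoothness estimate displayed above. Here homogeneity of degree zero and $\Omega\in\mathrm{Lip}_\beta(\mathbb S^{n-1})$ enter: split the difference as $\Omega(x-y)\bigl(|x-y|^{-(n-\alpha)}-|x-x_B|^{-(n-\alpha)}\bigr)$, controlled by the mean value theorem for $t\mapsto t^{-(n-\alpha)}$ together with $\|\Omega\|_{L^\infty}$ and $|y-x_B|\le|y-x_B|^{\beta}r_B^{1-\beta}$, plus $|x-x_B|^{-(n-\alpha)}\bigl(\Omega(x-y)-\Omega(x-x_B)\bigr)$, controlled by the Lipschitz-$\beta$ norm of $\Omega$ on the sphere after estimating the angular separation $\bigl|\tfrac{x-y}{|x-y|}-\tfrac{x-x_B}{|x-x_B|}\bigr|\lesssim |y-x_B|/|x-x_B|$. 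Everything else is a routine transcription of Lemma~\ref{3.1}, with the exponent $n-\alpha+\beta$ replacing $n-\alpha+1$ throughout, which is precisely what forces the hypothesis $(n-\alpha+\beta)i(\varphi_2)>nq(\varphi_2)$.
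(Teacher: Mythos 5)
Your argument is correct and is essentially the route the paper intends: the paper omits the proof of Theorem \ref{T4.1}, referring to ``similar arguments with slight modifications'' to Theorems A and B, and your write-up is exactly that modification --- the template of Lemma \ref{3.1} with the mean-value kernel estimate replaced by the $\mathrm{Lip}_\beta$ smoothness bound, so that $n-\alpha+1$ becomes $n-\alpha+\beta$ throughout. The kernel splitting you give (mean value theorem for $t\mapsto t^{-(n-\alpha)}$ plus the angular separation estimate for the homogeneous factor) is the standard and correct way to obtain the displayed H\"ormander-type inequality, and the remaining Musielak--Orlicz bookkeeping matches the paper's.
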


\begin{theorem}\label{T4.3}
      Let $\beta\in(0,1]$, $\alpha\in(0,\beta)$, $\Omega\in Lip_\beta(\mathbb{S}^{n-1})$, $\varphi_1,\varphi_2$ be growth functions with $\varphi_2\in\mathbb{RH}_\infty$ and $(n-\alpha+\beta)i(\varphi_2)>nq(\varphi_2)$, there exists a positive constant $C_1$ such that, for all balls $B\subset\mathbb{R}^n$, $|B|^{\frac{\alpha}{n}}\|\chi_B\|_{L^{\varphi_2}(\mathbb{R}^n)}\le C_1\|\chi_B\|_{L^{\varphi_1}(\mathbb{R}^n)}$, then  $T_{\Omega,\alpha}$ is bounded from $H^{\varphi_1}(\mathbb{R}^n)$ to $H^{\varphi_2}(\mathbb{R}^n)$.
\end{theorem}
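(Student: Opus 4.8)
The plan is to mimic the proof of Theorem~\ref{T1.3}: reduce to atoms via Lemma~\ref{2.2}, then show that $T_{\Omega,\alpha}$ applied to an $(H^{\varphi_1},\infty)$-atom decomposes into $(H^{\varphi_2},q)$-atoms with geometrically decaying coefficients, exactly as Lemma~\ref{3.2} does for $I_\alpha$, but now carrying the smoothness exponent $\beta$ in place of $1$. Since $H^{\varphi_2}(\mathbb{R}^n)$ is quasi-Banach and $T_{\Omega,\alpha}$ is linear, by Lemma~\ref{2.2} it is enough to prove a bound $\|T_{\Omega,\alpha}a\|_{H^{\varphi_2}(\mathbb{R}^n)}\le C$ uniform over all continuous $(H^{\varphi_1},\infty)$-atoms $a$ related to balls $B=B(x_B,r_B)$.

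The first step is a kernel regularity estimate for $K(x,y):=\Omega(x-y)|x-y|^{\alpha-n}$: I would show that, whenever $|x-x_B|\ge 2|y-x_B|$,
\begin{equation*}
|K(x,y)-K(x,x_B)|\le C\,\frac{|y-x_B|^{\beta}}{|x-x_B|^{\,n-\alpha+\beta}},
\end{equation*}
by splitting $K(x,y)-K(x,x_B)=[\Omega(x-y)-\Omega(x-x_B)]|x-y|^{\alpha-n}+\Omega(x-x_B)\big(|x-y|^{\alpha-n}-|x-x_B|^{\alpha-n}\big)$: the first term is handled by the $\mathrm{Lip}_\beta$ continuity of $\Omega$ on $\mathbb{S}^{n-1}$ together with $\big|\frac{x-y}{|x-y|}-\frac{x-x_B}{|x-x_B|}\big|\le C\frac{|y-x_B|}{|x-x_B|}$ and $|x-y|\approx|x-x_B|$, while the second term is handled by the mean value theorem, which gives $C|y-x_B|\,|x-x_B|^{-(n-\alpha+1)}$ and hence the claimed bound because $\beta\le 1$ and $|y-x_B|\le\frac12|x-x_B|$.

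Next, since $(n-\alpha+\beta)i(\varphi_2)>nq(\varphi_2)$, I would fix $p\in(0,i(\varphi_2))$ and $q\in(q(\varphi_2),\infty)$ with $nq/p<n-\alpha+\beta$ and set $\varepsilon:=n-\alpha+\beta-nq/p>0$, and then run the Calder\'on--Zygmund-type construction of Lemma~\ref{3.2} essentially verbatim on $M:=T_{\Omega,\alpha}a$: subtract the averages $M_j$ of $M$ over the annuli $E_j$ about $B$, telescope the non-cancellative part $\sum_j M_j\chi_{E_j}$, and estimate the resulting pieces in the norms $\|\cdot\|_{L^q_{\varphi_2}}$ using the regularity estimate above (with exponent $n-\alpha+\beta$), the cancellation $\int a=0$, and Lemma~\ref{2.3}; the local piece ($j=0$) is controlled by the $L^{p_0}\!\to L^{q_0}$ boundedness of $T_{\Omega,\alpha}$, which follows from $|T_{\Omega,\alpha}f|\le\|\Omega\|_{L^{\infty}(\mathbb{S}^{n-1})}I_\alpha(|f|)$, the Hardy--Littlewood--Sobolev inequality, and $\varphi_2\in\mathbb{RH}_\infty$. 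This should yield $T_{\Omega,\alpha}a=\sum_{j\ge1}\lambda_j a_j$ with each $a_j$ an $(H^{\varphi_2},q)$-atom related to $2^jB$ and $|\lambda_j|\le C2^{-j\varepsilon}$, $C$ independent of $a$.

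Finally, because the balls $2^jB$ are nested and the coefficients decay geometrically, the standard atomic norm estimate for $H^{\varphi_2}(\mathbb{R}^n)$ (via Lemmas~\ref{2.1},~\ref{2.3} and~\ref{2.4} and the definition of $H^{\varphi_2}$) gives $\|T_{\Omega,\alpha}a\|_{H^{\varphi_2}(\mathbb{R}^n)}\le C\sum_{j\ge1}|\lambda_j|\,\|a_j\|_{H^{\varphi_2}(\mathbb{R}^n)}\le C\sum_{j\ge1}2^{-j\varepsilon}\le C$, uniformly in $B$, and Lemma~\ref{2.2} finishes the proof. I expect the main obstacle to be the third step: transcribing Lemma~\ref{3.2} faithfully to the rough kernel $K$, in particular keeping track of the correction atoms produced when telescoping $\sum_j M_j\chi_{E_j}$ and checking that each inherits the geometric factor $2^{j(nq/p-n+\alpha-\beta)}$, and verifying that $\varphi_2\in\mathbb{RH}_\infty$ is precisely the hypothesis making the H\"older-inequality manipulations in the weighted norms and the $j=0$ term work. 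The kernel estimate and the final summation should be routine.
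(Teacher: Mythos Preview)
Your proposal is correct and follows essentially the same approach the paper has in mind. The paper does not give a detailed proof of Theorem~\ref{T4.3}, merely remarking that it follows by ``similar arguments with slight modifications'' to Huy--Ky's proof of Theorem~B; the presence of the extra hypothesis $\varphi_2\in\mathbb{RH}_\infty$ (absent from Theorem~B but present in Lemma~\ref{3.2}) confirms that the intended argument is precisely the atomic decomposition of $T_{\Omega,\alpha}a$ that you outline, with the smoothness exponent $\beta$ replacing $1$ throughout.
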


Further, by employing a similar approach to Theorems \ref{T1.1} and \ref{T1.3}, we can derive the boundedness of the commutators $[b,T_{\Omega,\alpha}]$ in the Musielak--Orlicz Hardy spaces.

\begin{theorem}\label{T4.2}
     Let $\beta\in(0,1]$, $\alpha\in(0,\beta)$, $\Omega\in Lip_\beta(\mathbb{S}^{n-1})$, $\varphi_1, \varphi_2$ be two growth functions,  and $\varphi_2\in\mathbb{A}_\infty$ be a Musielak--Orlicz function satisfies $1\leq i(\varphi_2)\leq n/(n-\alpha)$ and $(n-\alpha+\beta)i(\varphi_2)>nq(\varphi_2)$, there exists a positive constant $C_1$ such that, for all balls $B\subset\mathbb{R}^n$, $|B|^{\frac{\alpha}{n}}\|\chi_B\|_{L^{\varphi_2}(\mathbb{R}^n)}\le C_1\|\chi_B\|_{L^{\varphi_1}(\mathbb{R}^n)}$. If $b\in\mathcal{BMO}_{\varphi_1}(\mathbb{R}^n)$, then the commutator $[b,T_{\Omega,\alpha}]$ is bounded from $H^{\varphi_1}(\mathbb{R}^n)$ to $L^{\varphi_2}(\mathbb{R}^n)$.
\end{theorem}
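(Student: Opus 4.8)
The plan is to mirror the argument for Theorem \ref{T1.1}, replacing $I_\alpha$ by $T_{\Omega,\alpha}$ throughout and tracking how the Lipschitz regularity of $\Omega$ enters. By Lemma \ref{2.2} it suffices to show that for every continuous $(H^{\varphi_1},\infty)$-atom $a$ associated with a ball $B=B(x_B,r_B)$ one has
\begin{align*}
\|[b,T_{\Omega,\alpha}](a)\|_{L^{\varphi_2}(\mathbb{R}^n)}\le C\|b\|_{\mathcal{BMO}_{\varphi_1}(\mathbb{R}^n)}.
\end{align*}
Using the standard decomposition $[b,T_{\Omega,\alpha}](a)=(b-b_B)\,T_{\Omega,\alpha}a-T_{\Omega,\alpha}((b-b_B)a)$, the second term is handled by Theorem \ref{T4.1} (boundedness of $T_{\Omega,\alpha}$ from $H^{\varphi_1}$ to $L^{\varphi_2}$) together with Lemma \ref{2.7}, exactly as in the proof of Theorem \ref{T1.1}. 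So the real work is an analogue of Lemma \ref{3.1} for $T_{\Omega,\alpha}$, namely a pointwise bound on $|T_{\Omega,\alpha}a(x)|$ on the annuli $E_1=2B$ and $E_j=2^jB\setminus 2^{j-1}B$, $j\ge2$.

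For $x\in E_1$ the near estimate is unchanged up to the factor $\|\Omega\|_{L^\infty(\mathbb{S}^{n-1})}$: since $\Omega\in Lip_\beta(\mathbb{S}^{n-1})$ it is bounded, and $\int_B |x-y|^{-(n-\alpha)}\,dy\le C|B|^{\alpha/n}$ gives $|T_{\Omega,\alpha}a(x)|\le C|B|^{\alpha/n}\|\chi_B\|_{L^{\varphi_1}}^{-1}\le C\|\chi_B\|_{L^{\varphi_2}}^{-1}$. For $x\in E_j$, $j\ge2$, I would use the cancellation of $a$ and write
\begin{align*}
|T_{\Omega,\alpha}a(x)|=\Big|\int_B\Big(\frac{\Omega(x-y)}{|x-y|^{n-\alpha}}-\frac{\Omega(x-x_B)}{|x-x_B|^{n-\alpha}}\Big)a(y)\,dy\Big|.
\end{align*}
Splitting the difference of the two kernels into the piece coming from $|x-y|^{-(n-\alpha)}-|x-x_B|^{-(n-\alpha)}$ (bounded, as before, by $C|y-x_B|\,|x-x_B|^{-(n-\alpha+1)}$ via the mean value theorem, with $\Omega$ contributing only its $L^\infty$ norm) and the piece coming from $\Omega(x-y)-\Omega(x-x_B)$ (bounded by $C\,(|y-x_B|/|x-x_B|)^\beta$ by the $Lip_\beta$ condition, divided by $|x-x_B|^{n-\alpha}$), one obtains
\begin{align*}
|T_{\Omega,\alpha}a(x)|\le C\,2^{-j(n-\alpha+\beta)}|B|^{\alpha/n}\|\chi_B\|_{L^{\varphi_1}(\mathbb{R}^n)}^{-1}\le C\,2^{-j(n-\alpha+\beta)}\|\chi_B\|_{L^{\varphi_2}(\mathbb{R}^n)}^{-1},
\end{align*}
using $r_B\le 2^{-j}|x-x_B|$ for $x\in E_j$ and the size condition $|B|^{\alpha/n}\|\chi_B\|_{L^{\varphi_2}}\le C\|\chi_B\|_{L^{\varphi_1}}$ (the factor $|B|^{1/n}$ from the $\beta\le1$ case $n-\alpha+1$ is replaced by the genuinely $\beta$-dependent decay $2^{-j(n-\alpha+\beta)}$). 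Given this pointwise estimate, the remainder of the argument is verbatim the computation in Lemma \ref{3.1}: pick $p\in(1,i(\varphi_2))$ and $q\in(q(\varphi_2),\infty)$ with $(n-\alpha+\beta)p>nq$, which is possible precisely because $(n-\alpha+\beta)i(\varphi_2)>nq(\varphi_2)$; then use Lemmas \ref{2.4} and \ref{2.6} to bound $\int_{E_j}\varphi_2\big(x,|b(x)-b_B|/\lambda\big)\,dx$ by $C\,2^{jnq}(j+1)\,\varphi_2\big(B,\|b\|_{\mathrm{BMO}}/\lambda\big)$, combine with the uniformly lower type $p$ property, and sum the geometric series in $j$. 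This yields $\|(b-b_B)T_{\Omega,\alpha}a\|_{L^{\varphi_2}(\mathbb{R}^n)}\le C\|b\|_{\mathrm{BMO}(\mathbb{R}^n)}$, and hence the claimed commutator bound.

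The main obstacle is isolating the contribution of $\Omega(x-y)-\Omega(x-x_B)$ correctly: one must note that $x-y$ and $x-x_B$ have comparable lengths on $E_j$ (both $\approx 2^jr_B$), so the $Lip_\beta$ estimate on the sphere translates into $|\Omega(x-y)-\Omega(x-x_B)|\le C\,|(x-y)/|x-y| - (x-x_B)/|x-x_B||^\beta\le C\,(r_B/(2^jr_B))^\beta$, and this is exactly the term that forces the exponent $n-\alpha+\beta$ (rather than $n-\alpha+1$) and thereby the hypothesis $(n-\alpha+\beta)i(\varphi_2)>nq(\varphi_2)$ and $\alpha<\beta$. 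The role of the extra hypothesis $1\le i(\varphi_2)\le n/(n-\alpha)$ is to guarantee, via Theorem \ref{T4.1}, that $T_{\Omega,\alpha}$ itself maps $H^{\varphi_1}$ into $L^{\varphi_2}$ so that the $T_{\Omega,\alpha}((b-b_B)a)$ term can be controlled; everything else is a routine transcription of the $\Omega=1$ proof.
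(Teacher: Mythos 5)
Your proposal is correct and follows essentially the same route the paper intends: the paper proves Theorem \ref{T4.2} only by reference to the argument for Theorem \ref{T1.1}, and your transcription — splitting the kernel difference into the mean-value-theorem piece and the $\Omega(x-y)-\Omega(x-x_B)$ piece to get the decay $2^{-j(n-\alpha+\beta)}$, then handling $T_{\Omega,\alpha}((b-b_B)a)$ via Theorem \ref{T4.1} and Lemma \ref{2.7} — is exactly that adaptation.
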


\begin{theorem}\label{T4.4}
      Let $\beta\in(0,1]$, $\alpha\in(0,\beta)$, $\Omega\in Lip_\beta(\mathbb{S}^{n-1})$, $\varphi_1,\varphi_2$ be growth functions with $\int_{\mathbb{R}^n}\varphi_1(x,(1+|x|^{-n})dx<\infty$, $\int_{\mathbb{R}^n}\varphi_2(x,(1+|x|^{-n})dx<\infty$, and $\varphi_2\in \mathbb{RH}_\infty$,  and $(n-\alpha+\beta)i(\varphi_2)>n$, there exists a positive constant $C_1$ such that, for all balls $B\subset\mathbb{R}^n$, $|B|^{\frac{\alpha}{n}}\|\chi_B\|_{L^{\varphi_2}(\mathbb{R}^n)}\le C_1\|\chi_B\|_{L^{\varphi_1}(\mathbb{R}^n)}$. If $b\in\mathcal{BMO}_{\varphi_1}(\mathbb{R}^n)\cap\mathcal{BMO}_{\varphi_2,u}(\mathbb{R}^n)$ $(1<u<\infty)$, then the commutator $[b,T_{\Omega,\alpha}]$ is bounded from $H^{\varphi_1}(\mathbb{R}^n)$ to $H^{\varphi_2}(\mathbb{R}^n)$.
\end{theorem}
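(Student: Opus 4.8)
The plan is to follow the proof of Theorem~\ref{T1.3} line by line, with $I_\alpha$ replaced by $T_{\Omega,\alpha}$; the single genuinely new ingredient is a kernel-regularity estimate that uses $\Omega\in Lip_\beta(\mathbb{S}^{n-1})$ in place of the mean value theorem for $|\cdot|^{-(n-\alpha)}$. By Lemma~\ref{2.2} it suffices to bound $\|[b,T_{\Omega,\alpha}](a)\|_{H^{\varphi_2}(\mathbb{R}^n)}$ by $C(\|b\|_{\mathcal{BMO}_{\varphi_1}(\mathbb{R}^n)}+\|b\|_{\mathcal{BMO}_{\varphi_2,u}(\mathbb{R}^n)})$ for every continuous $(H^{\varphi_1},\infty)$-atom $a$ supported in $B=B(x_B,r_B)$. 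Writing $[b,T_{\Omega,\alpha}](a)=[b-b_B]\,T_{\Omega,\alpha}a-T_{\Omega,\alpha}\big((b-b_B)a\big)$, the second term is treated exactly as in Theorem~\ref{T1.3}: by Theorem~\ref{T4.3} and Lemma~\ref{2.7}, $\|T_{\Omega,\alpha}((b-b_B)a)\|_{H^{\varphi_2}}\le C\|(b-b_B)a\|_{H^{\varphi_1}}\le C\|b\|_{\mathcal{BMO}_{\varphi_1}}$.

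For the first term one needs the analogue of Lemma~\ref{3.2} for $T_{\Omega,\alpha}$. The point is that for $x$ in the annulus $E_j=2^{j+1}B\setminus 2^{j}B$ and $y\in B$ the kernel difference $\big|\frac{\Omega(x-y)}{|x-y|^{n-\alpha}}-\frac{\Omega(x-x_B)}{|x-x_B|^{n-\alpha}}\big|$ splits into a radial part, bounded by $C\,\|\Omega\|_{L^\infty}\,|y-x_B|\,|x-x_B|^{-(n-\alpha+1)}$, and an angular part, bounded via $|\Omega(u')-\Omega(v')|\le C|u'-v'|^\beta$ and $|(x-y)'-(x-x_B)'|\le C|y-x_B|/|x-x_B|$ by $C\,|y-x_B|^\beta\,|x-x_B|^{-(n-\alpha+\beta)}$; since $0<\beta\le1$ the angular part dominates, and after invoking $\int_{\mathbb{R}^n}a=0$ one gets $|T_{\Omega,\alpha}a(x)|\le C\,2^{-j(n-\alpha+\beta)}|B|^{\alpha/n}\|\chi_B\|_{L^{\varphi_1}}^{-1}$ on $E_j$. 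With this decay, the $L^{p_0}\!\to\!L^{q_0}$ boundedness of $T_{\Omega,\alpha}$ (valid for bounded $\Omega$, $1/q_0=1/p_0-\alpha/n$) for the innermost ball, and the hypotheses $\varphi_2\in\mathbb{RH}_\infty\cap\mathbb{A}_q$ together with the index condition used in the form $(n-\alpha+\beta)i(\varphi_2)>nq(\varphi_2)$, the construction of Lemma~\ref{3.2} goes through verbatim: the annular pieces $\alpha_j=(T_{\Omega,\alpha}a-M_j)\chi_{E_j}$ (with $M_j$ the mean of $T_{\Omega,\alpha}a$ over $E_j$) and the telescoping corrections $\beta_j$ are, up to normalization, $(H^{\varphi_2},s)$-atoms related to $2^{j}B$ for any prescribed $s\in(uq(\varphi_2),\infty)$, yielding $T_{\Omega,\alpha}a=\sum_{j\ge1}\lambda_j a_j$ with $|\lambda_j|\le C\,2^{-j\varepsilon}$ for some $\varepsilon>0$.

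It remains to estimate $\|[b-b_B]\,T_{\Omega,\alpha}a\|_{H^{\varphi_2}}$. On the support of $a_j$ write $b-b_B=(b-b_{2^{j+1}B})+(b_{2^{j+1}B}-b_B)$ and use $|b_{2^{j+1}B}-b_B|\le C(j+1)\|b\|_{\mathrm{BMO}}$; this reduces matters to the two atomic bounds $\|(b-b_{2^{j+1}B})a_j\|_{H^{\varphi_2}}\le C\|b\|_{\mathcal{BMO}_{\varphi_2,u}}$ and $\|(b_{2^{j+1}B}-b_B)a_j\|_{H^{\varphi_2}}\le C(j+1)\|b\|_{\mathrm{BMO}}$, after which summing against $\sum_j|\lambda_j|(j+1)<\infty$ closes the proof. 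The first bound is obtained exactly as in Theorem~\ref{T1.3}: decompose $\mathcal{M}_\phi((b-b_{2^{j+1}B})a_j)$ over $2^{j+1}B$ and its complement; on $2^{j+1}B$ apply Hölder's inequality with exponents tied to $s$, Lemmas~\ref{2.3} and~\ref{2.1}, and the $\mathrm{BMO}$ bound; on the complement use $|x-y|\approx|x-x_B|$, Hölder's inequality for $s/u$ with $\varphi_2\in\mathbb{A}_{s/u}$, and the defining quantity of $\|b\|_{\mathcal{BMO}_{\varphi_2,u}}$ with its factor $\||\cdot-x_B|^{-n}\|_{L^{\varphi_2}((2^{j+1}B)^c)}$. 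I expect the main obstacle to be precisely the angular regularity estimate for the rough kernel together with the bookkeeping that shows the exponent $n-\alpha+\beta$ is large enough---given $\varphi_2\in\mathbb{RH}_\infty$ and the index condition---for the atomic decomposition of $T_{\Omega,\alpha}a$ to converge with geometric coefficients; once that is in place, every remaining step is a routine transcription of Section~\ref{S3} from the case $\Omega\equiv1$.
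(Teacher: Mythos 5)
Your proposal is correct and is essentially the proof the paper intends: the paper proves Theorem~\ref{T4.4} only by the remark that one ``employs a similar approach to Theorems~\ref{T1.1} and~\ref{T1.3},'' and you supply exactly the one new ingredient that remark suppresses, namely the radial/angular splitting of the kernel difference with the $Lip_\beta$ condition giving the decay $2^{-j(n-\alpha+\beta)}$ in place of $2^{-j(n-\alpha+1)}$, after which Lemma~\ref{3.2} and the two-piece estimate of $\mathcal{M}_\phi((b-b_{2^{j+1}B})a_j)$ transcribe verbatim. Note only that you (correctly) use the index condition in the form $(n-\alpha+\beta)i(\varphi_2)>nq(\varphi_2)$, which is what the argument and the appeal to Theorem~\ref{T4.3} actually require, whereas the statement of Theorem~\ref{T4.4} writes $(n-\alpha+\beta)i(\varphi_2)>n$ --- an apparent typo in the paper rather than a gap in your proof.
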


\begin{remark}
The above results generalize the results in {\rm\cite{DLL2003,HanWu}}, which the boundedness of  fractional integral operators  $T_{\Omega,\alpha}$ and their commutators $[b,T_{\Omega,\alpha}]$ from weighted Hardy spaces to more general Musielak--Orlicz  Hardy spaces.
\end{remark}

\noindent{\bf Acknowledgments:}
Yanyan Han is supported by the National Natural Science Foundation of China(No. 12526513), the Fundamental Research Funds for the Central Universities, and the Research Funds of People's Public Security University of China(PPSUC) (No. 2024JKF02ZK08), and the Open Research Fund of Hubei Key Laboratory of Mathematical Sciences (Central China Normal University), Wuhan 430079, P. R. China. Hongwei Huang is supported by the Natural Science Foundation of Fujian Province of China (No. 2023J01026). Huoxiong Wu is supported by the National Natural Science Foundation of China (No. 12271041).

\medskip

\noindent {\bf Compliance with Ethical Standard}

\medskip

\noindent {Conflict of Interests:} The authors declare that there is no conflict of interests regarding the publication of this paper.

\medskip

\noindent {Data Availability Statements:} No data-sets were generated or analysed during the current study.
\medskip

\end{document}